\newtheorem{thm}{Theorem}[section]
\newtheorem{lem}[thm]{Lemma}
\newcommand{\A}{\mathcal{A}}
\title{Forbidding intersection patterns between layers of the cube}
\author{Eoin Long\thanks{School of Mathematical Sciences, Queen Mary University of London, Mile End Road, London E1 4NS, United Kingdom. E-mail: eoinlong@post.tau.ac.il}}
\date{}
\begin{document}

\maketitle

\begin{abstract}
 A family ${\mathcal A} \subset {\mathcal P}[n]$ is said to be an antichain if $A \not \subset B$ for all distinct $A,B \in {\mathcal A}$. A classic result of Sperner shows that such families satisfy $|{\mathcal A}| \leq \binom {n}{\lfloor n/2\rfloor }$, which is easily seen to be best possible. One can view the antichain condition as a restriction on the intersection sizes between sets in different layers of ${\mathcal P}[n]$. More generally one can ask, given a collection of intersection restrictions between the layers, how large can families respecting these restrictions be? Answering a question of Kalai \cite{kal}, we show that for most collections of such restrictions, layered families are asymptotically largest. This extends results of Leader and the author from \cite{LL}.
\end{abstract}

%
%
%
%
\section{Introduction}

A family ${\mathcal A} \subset {\mathcal P}[n]$ is said to be an antichain if $A \not \subset B$ for all distinct $A,B \in {\mathcal A}$. A classic result in extremal combinatorics is Sperner's theorem \cite{sper}, which shows that any such family ${\mathcal A}$ has size at most $\binom {n}{\lfloor n/2\rfloor }$. This is easily seen to be best possible. This result has been hugely influential, having numerous interesting applications and extensions (for example, see \cite{com} and \cite{eng} for an overview of some of these directions).

Recently, Sperner's theorem was applied in a new proof of Furstenberg and Katznelson's density Hales-Jewett theorem by the polymath internet project (\cite{DHJ}, \cite{FK}). Here, roughly speaking, Sperner's theorem (and a multi-dimensional extension of Gunderson, R\"odl and Sidorenko \cite{GRS}) form a base level of an induction hypothesis. While weaker than Sperner's theorem, a crucial fact here was that any Sperner family ${\mathcal A}\subset {\mathcal P}[n]$ satisfies $|{\mathcal A}|= o(2^n)$. 

Motivated by its place in the proof of the density Hales-Jewett theorem, Kalai \cite{kal} asked whether it is possible to obtain similar results for other `Sperner-like conditions'. One example of such a condition was the tilted Sperner condition considered in \cite{LL}. Kalai noted that the Sperner condition can be rephrased as follows: ${\mathcal A}$ does not contain two sets $A$ and $B$ such that, in the unique subcube of ${\mathcal P}[n]$ spanned by $A$ and $B$, $A$ is the bottom point and $B$ is the top point. He asked: what happens if we forbid $A$ and $B$ to be at a different position in this subcube? In particular, he asked how large ${\mathcal A} \subset {\mathcal P}[n]$ can be if we forbid $A$ and $B$ to be at a `fixed ratio' $p:q$ in this 
subcube. That is, we forbid $A$ to be $p/(p+q)$ of the way up this subcube and $B$ to be $q/(p+q)$ of the way up this subcube. Equivalently, $q|A\setminus B| \neq p|B\setminus A|$ for all distinct $A,B\in {\mathcal A}$. Note that the Sperner condition corresponds to taking $p=0$ and $q=1$. In \cite{LL}, an asymptotically tight answer was given for all ratios $p:q$, showing that one cannot improve on the `obvious' example, namely the $q-p$ middle layers of ${\mathcal P}[n]$. 
\begin{thm}[\cite{LL}]
 \label{tiltedsperner}
  Let $p,q$ be coprime natural numbers with $q\geq p$. Suppose ${\mathcal A} \subset {\mathcal P}[n]$ does not contain distinct $A,B$ with
  $q|A\setminus B| = p|B\setminus A|$. Then 
\begin{equation}
   |\A| \leq (q-p + o(1))\binom {n}{n/2}.
\end{equation}
\end{thm}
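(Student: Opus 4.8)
The plan is to establish a Sperner/LYM-type bound in which ordinary maximal chains are replaced by ``skew chains'' adapted to the forbidden pattern, after discarding the negligible mass of the layers far from the middle. We may assume $1\le p<q$: if $q=p$ then $p=q=1$ and the hypothesis already forbids two distinct equal-sized sets, so $|\A|\le n+1$; and $p=0,\,q=1$ is Sperner's theorem. Put $w=\lceil\sqrt n\log n\rceil$, let $\mathcal B=\{k:|k-n/2|\le w\}$ and $\A_{\mathcal B}=\{A\in\A:|A|\in\mathcal B\}$; standard binomial-tail estimates give $\sum_{k\notin\mathcal B}\binom nk=o(\binom n{n/2})$, so $|\A|\le|\A_{\mathcal B}|+o(\binom n{n/2})$ and it is enough to show $|\A_{\mathcal B}|\le(q-p)\binom n{n/2}$. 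Write $d=q-p$, and for $\rho\in\{0,\dots,d-1\}$ call $\{k\in\mathcal B:k\equiv\rho\pmod d\}$ the \emph{$\rho$-tower}; it is an arithmetic progression with common difference $d$, from $s_\rho:=\min$ to $t_\rho:=\max$, of length $L_\rho:=(t_\rho-s_\rho)/d$.

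\emph{Skew chains.} Call $X_0,\dots,X_M\subseteq[n]$ a \emph{$(p,q)$-chain} if $X_i=(X_{i-1}\setminus D_i)\cup I_i$ for each $i$, where $D_i\subseteq X_0$ and $I_i\subseteq[n]\setminus X_0$ with $|D_i|=p$, $|I_i|=q$, the $D_i$ pairwise disjoint and the $I_i$ pairwise disjoint. Then $X_i=(X_0\setminus\bigcup_{l\le i}D_l)\cup\bigcup_{l\le i}I_l$, whence $|X_i\setminus X_j|=p(j-i)$ and $|X_j\setminus X_i|=q(j-i)$ for $i<j$; so $q|X_i\setminus X_j|=p|X_j\setminus X_i|$, meaning $\{X_i,X_j\}$ is a forbidden pair and hence any pattern-free family contains at most one set from any $(p,q)$-chain. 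A $(p,q)$-chain visits sizes in arithmetic progression with step $d$. Call one lying in $\mathcal B$ \emph{band-maximal} if it cannot be lengthened at either end within $\mathcal B$; since $|X_0|=n/2+O(w)$ and $M=O(w/d)$, the ``reservoirs'' $X_0\setminus\bigcup D_i$ and $([n]\setminus X_0)\setminus\bigcup I_i$ stay of size $(1-o(1))n/2$, so band-maximality is determined solely by reaching the ends of $\mathcal B$. In particular, for each $\rho$, every band-maximal $(p,q)$-chain with sizes in the $\rho$-tower runs through exactly the layers $s_\rho,s_\rho+d,\dots,t_\rho$, each once.

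\emph{An LYM inequality on each tower, and the conclusion.} Fix $\rho$ and a set $A$ with $|A|=m$ in the $\rho$-tower, and write $a:=(m-s_\rho)/d$ and $b:=L_\rho-a$, the numbers of moves below and above $A$ in a band-maximal chain of the tower. Such a chain is obtained by choosing an $aq$-subset $S\subseteq A$ and an $ap$-subset $R\subseteq[n]\setminus A$ (which fix $X_0=(A\setminus S)\cup R$), ordering $S$ and $R$ into the first $a$ insertion and deletion blocks, and then freely choosing the remaining $b$ deletion blocks inside $A\cap X_0$ (of size $m-aq$) and $b$ insertion blocks inside $[n]\setminus(A\cup X_0)$. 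Multiplying out the binomial and multinomial factors and simplifying, the pivotal observation is that $m-aq-bp$ and $n-m-ap-bq$ turn out to be \emph{independent of $m$} --- they depend only on $\rho$, through $s_\rho,t_\rho$ --- so that the count collapses to
\[
 \#\{\text{band-maximal }(p,q)\text{-chains of the }\rho\text{-tower through }A\}=\frac{m!\,(n-m)!}{C_\rho}
\]
for a constant $C_\rho$ depending only on $\rho$; this is precisely the form taken by the ordinary-chain count $m!(n-m)!$, and summing it over all $A$ in a single layer of the tower recovers the total number $n!/C_\rho$ of such chains. Double counting incidences between $\A_{\mathcal B}$ and the band-maximal $(p,q)$-chains of the $\rho$-tower, and using that each carries at most one member of $\A$, gives $\sum_{A\in\A_{\mathcal B},\ |A|\equiv\rho\,(\mathrm{mod}\ d)}m!(n-m)!\le n!$, i.e.
\[
 \sum_{A\in\A_{\mathcal B},\ |A|\equiv\rho\pmod d}\binom n{|A|}^{-1}\le1 .
\]
Summing over the $d=q-p$ residues and using $\binom n{|A|}\le\binom n{n/2}$ gives $|\A_{\mathcal B}|\le(q-p)\binom n{n/2}$, which together with the first paragraph proves the theorem. (As a check, the $q-p$ central layers have distinct residues modulo $q-p$, meet every band-maximal $(p,q)$-chain exactly once, and so give equality in each tower inequality.)

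\emph{Where the difficulty lies.} The only substantial step is the displayed identity: that within a tower the skew chains are spread over its layers in exactly the same proportions as ordinary chains over $\{0,\dots,n\}$. This is a bookkeeping calculation, but a genuinely delicate one --- a $(p,q)$-chain skips layers, its deletions are forced to come from a shrinking core of $X_0$, and the split $a+b=L_\rho$ of moves on either side of $A$ shifts with $m$ --- and it is precisely the cancellation making $m-aq-bp$ and $n-m-ap-bq$ free of $m$ that yields the clean inequality. Note also that passing to residue classes is unavoidable: the forbidden pattern permits nested pairs (a maximal chain in $\mathcal P[n]$ is itself pattern-free), so no single global chain argument can work, whereas within one tower a $(p,q)$-chain behaves exactly like an ordinary chain.
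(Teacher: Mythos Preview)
Your proof is correct and proceeds by a direct LYM/Katona-style double count with purpose-built $(p,q)$-chains. The pivotal identity is exactly right: with $a=(m-s_\rho)/d$ and $b=(t_\rho-m)/d$ one has $m-aq-bp=(qs_\rho-pt_\rho)/d$ and $n-m-ap-bq=n+(ps_\rho-qt_\rho)/d$, both independent of $m$, so the chain count through $A$ collapses to $m!(n-m)!/C_\rho$ and the LYM inequality on each tower follows. The rest is routine.

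The paper, however, does not prove Theorem~\ref{tiltedsperner} directly: it quotes it from \cite{LL} and then observes that it is a special case of the much more general Theorem~\ref{genstickout}, whose proof runs through a random chain construction carrying local density conditions (Lemma~\ref{configuration}) combined with the Frankl--R\"odl forbidden-intersection theorem. Your argument is considerably more elementary --- it avoids Frankl--R\"odl entirely --- and in fact yields the sharp inequality $|\A_{\mathcal B}|\le(q-p)\binom{n}{n/2}$ on the central band, with the $o(1)$ coming only from the tails; this is close in spirit to the original treatment in \cite{LL}. The trade-off is specificity: your skew chains are tailored to the $p\!:\!q$ pattern and exploit its arithmetic, whereas the paper's machinery handles arbitrary collections of pairwise layer restrictions simultaneously, at the cost of heavier tools and a weaker error term.
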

Up to the $o(1)$ term, this is best possible. Indeed, the proof of Theorem \ref{tiltedsperner} in \cite{LL} also gives the exact maximum size of such ${\mathcal A}$ for infinitely many values of $n$.

Here we will view the Sperner condition from a slightly different perspective. Given $i \in [0,n]$, let $[n]^{(i)} = \{A \subset \{1,\ldots ,n\}: |A| = i\}$ and given a family of sets ${\mathcal A} \subset {\mathcal P}[n]$, let ${\mathcal A}^{(i)}$ denote the set ${\mathcal A}^{(i)} = \{A\in {\mathcal A}:|A| = i\}$.

\begin{definition}
A family ${\mathcal A} \subset {\mathcal P}[n]$ satisfies an $x_{ij}$-\emph{pairwise restriction} between layers $i$ and $j$ of the cube if $|A\setminus B| \neq x_{ij}$ for all $A\in {\mathcal A}^{(i)}$ and $B \in {\mathcal A}^{(j)}$. 
\end{definition}
\vspace{.2cm}

Both the Sperner and tilted Sperner conditions can be viewed as collections of pairwise restrictions between layers of the cube. Indeed, $\mathcal A$ is a Sperner family if and only if $ |A\setminus B|\neq 0$ for all $A\in {\mathcal A}^{(i)}$ and $B\in {\mathcal A}^{(j)}$ whenever $i<j$. Similarly the tilted Sperner conditions can be viewed as a collection of pairwise restrictions; for example, a small calculation shows that $\mathcal A$ is a 1:2-tilted Sperner family if and only if $|A\setminus B|\neq j-i $ for all $A\in {\mathcal A}^{(i)}$ and $B\in {\mathcal A}^{(j)}$ for some pairs $\{i,j\}$ (those $i<j$ which satisfy $j\leq 2i$ and $2j-i\leq n$). 
The main question we consider in this paper is the following: given a collection of pairwise restrictions between layers of the cube, how large can families respecting these restrictions be?

We represent a collection of pairwise restrictions by a pair $(G,\mathbf{x})$, where $G$ is a graph with vertex set $\{0,\ldots ,n\}$ and $\mathbf{x}=(x_{ij})$ is a vector whose coordinates are indexed by the edges of $G$. An edge $ij$ of $G$ indicates that there is a pairwise restriction between 
sets in $[n]^{(i)}$ and those in $[n]^{(j)}$. The entry $x_{ij}$ of $\mathbf{x}$ corresponding to this edge $ij$ then tells us what this restriction is:
\begin{equation*}
 |A\setminus B|\neq x_{ij}
\end{equation*} 
for sets $A\in [n]^{(i)}$ and $B\in [n]^{(j)}$. Note that since $|A\setminus B| \leq \min (|A|, |B^c|)$, this condition is vacuous unless $x_{ij}\in [0,\min (i,n-j)]$. 



\begin{definition}
Let $G$ be a graph on $\{0,\ldots n\}$ and let $x_{ij}\in [0,\min (i,n-j)]$ for all 
$ij\in E(G)$ with $i<j$. A family ${\mathcal A}\subset {\mathcal P}[n]$ is a $(G,\mathbf{x})$-Sperner 
family if for every edge $ij\in E(G)$, $|A\setminus B|\neq x_{ij}$ for all sets 
$A\in {\mathcal A}^{(i)}$ and $B\in {\mathcal A}^{(j)}$.
\end{definition}
\vspace{.2cm}


We will be mainly concerned with the cases where $i,j \approx n/2$ which gives $\min (i,n-j) \approx n/2$, as by Chernoff's inequality (\cite{chernoff}) most elements of ${\mathcal P}[n]$ 
lie in this range. 

In this language a Sperner family is 
just a $(K_{n+1},\mathbf{0})$-Sperner family. Similarly, a 1:2-tilted Sperner family is a 
$(G ,\mathbf {x})$-Sperner family where $ij\in E(G) \Leftrightarrow 2i\geq j \mbox{ and } 2j-i\leq n$
and $x_{ij} =j-i$ for all edges $ij\in E(G)$. Our main question can now be rephrased as 
follows: given $G$ and $\mathbf {x}$, how large can the $(G,{\mathbf{x}})$-Sperner families be?


One easy way to construct a large $(G,\mathbf{x})$-Sperner family is to take ${\mathcal A}$ to 
be a union of layers with no pairwise restrictions between them. Equivalently, 
\begin{equation}
 \label{GxSpernerconstruction}
 \mathcal A = \bigcup _{i\in I} [n]^{(i)}
\end{equation}
for an independent set $I$ of $G$. This shows that we can always find a $(G,\mathbf{x})$-Sperner 
family of size at least 
\begin{equation*}
w(G)=\max _{I} \sum _{i\in I} \binom {n}{i}
\end{equation*}
where here the maximum is taken over all independent sets $I$ in $G$. We call $w(G)$ the \emph{weight} 
of $G$. Furthermore, for the Sperner and tilted Sperner conditions, $w(G)$ actually gives the maximal size of Sperner 
and tilted Sperner families. Indeed, $G = K_{n+1}$ for the Sperner condition so $w(G) = \binom {n}{n/2}$. Similarly the extremal family ${\mathcal B}_0$ for the tilted Sperner conditions described in \cite{LL} have size exactly equal to the weight of the tilted Sperner graph. It is natural to ask 
whether $w(G)$ always determines the size of all maximal $(G,{\mathbf{x}})$-Sperner families? 

In general this is not true (an example is given at the end of Section $2$). However our main result here shows that, with some small control on the values of $x_{ij}$, all 
$(G,\mathbf{x})$-Sperner families ${\mathcal A}$ satisfy $|{\mathcal A}| \leq (1+o(1))w(G)$.

\begin{thm}
\label{genstickout}
 Let $G$ be a graph on vertex set $\{0,\ldots ,n\}$. Suppose that for all edges $ij$ of $G$ with $i<j$,  
$x_{ij}\in \{0,\ldots ,n/2 - 9{(n\log n)}^{1/2}\}$. Then all $(G,\mathbf{x})$-Sperner families ${\mathcal A}$ satisfy 
 \begin{equation}
   \label{boundforGxSperner}
   |{\mathcal A}| \leq w(G) + \frac{C}{n^{2/3}}2^n = (1 + o(1)) w(G).
 \end{equation}
\end{thm}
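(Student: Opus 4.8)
The natural approach is to find, for a $(G,\mathbf{x})$-Sperner family $\mathcal{A}$, a large "spread-out" structure inside it that behaves like a genuine antichain-type obstruction — essentially a random chain or a random embedded subcube — and argue that $\mathcal{A}$ cannot capture too much of it on any pair of restricted layers. More precisely, I would fix a maximal-weight independent set $I^*$ of $G$ achieving $w(G)$, write $\mathcal{B} = \bigcup_{i \in I^*}[n]^{(i)}$, and try to show $|\mathcal{A}| \le |\mathcal{B}| + o(2^n)$ by a counting/averaging argument over a suitable random object. The key point is that the hypothesis $x_{ij} \le n/2 - 9(n\log n)^{1/2}$ forces the forbidden "distance" $x_{ij}$ to be substantially below the typical value of $|A \setminus B|$ for a random pair $A \in [n]^{(i)}$, $B \in [n]^{(j)}$ with $i,j$ both near $n/2$, which should make the restriction "bite" for essentially every relevant pair.

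\medskip

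\textbf{Step 1: Reduce to the middle band.} By Chernoff's inequality the number of sets outside the band $M = \{i : |i - n/2| \le 4(n\log n)^{1/2}\}$ (say) is at most $n^{-\omega(1)}2^n$, so it suffices to bound $|\mathcal{A} \cap \bigcup_{i \in M}[n]^{(i)}|$. Also one may assume $G$ restricted to $M$ is the relevant graph, since edges touching vertices outside $M$ contribute negligibly to both $|\mathcal{A}|$ and $w(G)$.

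\medskip

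\textbf{Step 2: Set up the random comparison object and a local fractional relaxation.} For each level $i \in M$ let $a_i = |\mathcal{A}^{(i)}| / \binom{n}{i} \in [0,1]$ be the density of $\mathcal{A}$ in layer $i$. I want to show that $\sum_{i \in M} a_i \binom{n}{i} \le w(G) + o(2^n)$, i.e.\ that the density vector $(a_i)$ is, up to $o(1)$-slack, a fractional independent set in $G[M]$ weighted by $\binom{n}{i}$. To get the independence-type constraint, pick a uniformly random pair $(A,B)$ with $A \in [n]^{(i)}$, $B \in [n]^{(j)}$ for a fixed edge $ij$; I claim the "bipartite graph of allowed pairs" (those with $|A \setminus B| = x_{ij}$) is pseudorandom/regular enough that if both $a_i$ and $a_j$ were bounded away from appropriate thresholds we could find such a forbidden pair inside $\mathcal{A}$. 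The cleanest way to formalise this is via a weighted version of the Kruskal–Katona or a "shifting/compression down to the symmetric chain decomposition" argument: decompose $\mathcal{P}[n]$ (or its middle band) into symmetric chains, and on each chain the $(G,\mathbf{x})$-condition with $x_{ij}$ in the given range should force $\mathcal{A}$ to avoid certain pairs of levels on that chain, reducing to a one-dimensional (path/interval) problem whose optimum is exactly the weighted independence number.

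\medskip

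\textbf{Step 3: Solve the one-dimensional problem and sum up.} On a single chain $C$ of length $\ell$ (with levels indexed in an interval), the induced condition says: if level $i$ and level $j$ are both "hit" by $\mathcal{A}$ on $C$ then $ij \notin E(G)$ (using that $x_{ij}$ being in the small range means the forbidden configuration is actually realised along the chain for the relevant $i,j$ — this is where the $9(n\log n)^{1/2}$ slack is spent, to handle boundary chains and error terms). Hence the number of elements of $\mathcal{A}$ on $C$ is at most the max number of levels in an independent set of $G$ restricted to $C$'s level-set; averaging this over the symmetric chain decomposition, weighted by chain lengths, and comparing with the identity $\sum_C (\text{length of }C) = 2^n$ and $\binom{n}{i} = $ (number of chains through level $i$), yields $|\mathcal{A}| \le w(G) + \frac{C}{n^{2/3}}2^n$. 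The $n^{-2/3}$ (rather than a smaller error) will come from the combination of the Chernoff tail and the crude bound on how many chains behave "badly" near the ends of the band.

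\medskip

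\textbf{Main obstacle.} The hard part is Step 2: making precise the claim that a forbidden pair $|A \setminus B| = x_{ij}$ must actually occur inside $\mathcal{A}$ once the layer densities are non-negligible and $x_{ij}$ is well below $n/2$. A symmetric-chain decomposition does not automatically produce a pair at the prescribed difference $x_{ij}$ — it produces pairs at various differences — so one likely needs either (i) a supersaturation/counting argument showing the number of forbidden pairs between $\mathcal{A}^{(i)}$ and $\mathcal{A}^{(j)}$ is at least $(a_i a_j - o(1))$ times the total number of such pairs (using that the bipartite "distance-$x_{ij}$" graph between $[n]^{(i)}$ and $[n]^{(j)}$ is a normal/pseudorandom bipartite graph, with eigenvalue bounds coming from the Johnson-scheme / Eberlein polynomials), or (ii) a clever choice of random chain tailored so that consecutive marked levels sit at difference exactly $x_{ij}$. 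I expect route (i), via spectral pseudorandomness of the Johnson scheme together with an expander-mixing-lemma estimate, to be the technically decisive ingredient, and controlling its error term is what forces the precise range $x_{ij} \le n/2 - 9(n\log n)^{1/2}$ and produces the $\frac{C}{n^{2/3}}2^n$ loss.
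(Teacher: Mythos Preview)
Your high-level reduction is sound: if one could show that for every edge $ij\in E(G)$ at least one of $a_i:=|\mathcal A^{(i)}|/\binom{n}{i}$, $a_j$ is below some threshold $\varepsilon$, then the set $\{i:a_i>\varepsilon\}$ is independent and the bound follows with error $\varepsilon 2^n$. The paper's argument is organised the same way at this level. The difficulty is entirely in establishing that ``either $a_i$ or $a_j$ is small'' step, and here your two proposed routes both fail.

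The symmetric-chain route is a dead end for $x_{ij}\neq 0$, as you already note: on a chain $C_i\subset C_j$ one always has $|C_i\setminus C_j|=0$, so the restriction is simply not felt along a chain.

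The spectral/expander-mixing route has a more serious problem that you do not flag: for small $x_{ij}$ the bipartite ``distance-$x_{ij}$'' graph between $[n]^{(i)}$ and $[n]^{(j)}$ is \emph{not} pseudorandom in the sense you need. Take $x_{ij}=0$ (the Sperner case). Then the graph is the containment graph, and the sets $\mathcal A^{(i)}=\{A:1\in A\}$, $\mathcal A^{(j)}=\{B:1\notin B\}$ have densities $i/n,\,1-j/n\approx 1/2$ with \emph{no} edges between them. So no expander-mixing or Johnson-scheme eigenvalue bound can force $\min(a_i,a_j)\le n^{-2/3}$ here; the best one could hope for is $a_i+a_j\le 1$, which is far too weak. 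The same obstruction persists for all $x_{ij}=O(1)$.

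What the paper does instead is precisely to manufacture the missing ``chain'' structure and then perturb it. A random-permutation argument (their Lemma~2.2) produces a maximal chain $\mathcal C=\{C_i\}$ together with a set $I$ of levels such that $\sum_{i\in I}\binom{n}{i}>w(G)$ (so $I$ contains an edge $ij$) \emph{and} each $C_i$ for $i\in I$ sits inside $\mathcal A^{(i)}$ with strong local density in two nested random subcubes $S\subset T\subset [n]$ of sizes $|S|\sim n^{3/5}$, $|T|\sim n-2(n\log n)^{1/2}$. Because $C_i\subset C_j$ one starts from $|C_i\setminus C_j|=0$, and then three different mechanisms push this to $x_{ij}$: for $x_{ij}\le n^{1/3}$ a greedy walk through neighbours in $T^{(t/2)}$ (using a min-degree subgraph extraction, Lemma~2.1); for $n^{1/3}\le x_{ij}\le n^{3/5}$ the Frankl--R\"odl forbidden-intersection theorem applied inside the small cube $\mathcal P(S)$; and for $n^{3/5}\le x_{ij}\le n/2-9(n\log n)^{1/2}$ Frankl--R\"odl inside $\mathcal P(T)$. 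The sizes of $S$ and $T$ are chosen exactly so that $x_{ij}$ lands in the effective range of Frankl--R\"odl for that cube. The $n^{-2/3}$ error comes from the density threshold needed to run the neighbour-walk in the smallest regime, not from any spectral gap.

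So the missing idea in your plan is that the pseudorandomness you want is only available once one has first localised to a random subcube of carefully chosen size \emph{and} anchored the two sets on a common chain; without the chain anchor, small values of $x_{ij}$ cannot be handled at all.
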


\noindent \emph{Remark:} While the condition on the values of $x_{ij}$ in Theorem \ref{genstickout} may seem artificial, an example will be given in the next section to show that in general, in order for the conclusion of the theorem to hold, it is necessary that $x_{ij} \in [0, n/2 -c(n\log n)^{1/2}]$ for some $c>0$. 

We draw attention to the fact that with $\mathbf{x}$ as in Theorem \ref{genstickout}, the maximum size of a $(G,\mathbf{x})$-Sperner family does not depend on what the pairwise restrictions are between different layers (the values of $x_{ij}$) but just whether there is one (i.e. whether $ij\in E(G)$), which we feel is quite surprising.

Now note that it is easy to see that $\mathbf{x}$ satisfies the conditions of Theorem \ref{genstickout} for both the Sperner and tilted Sperner conditions. Theorem \ref{genstickout} therefore shows that Sperner and tilted Sperner families $\mathcal A$ satisfy $|{\mathcal {A}}|\leq (1+o(1))w(G)$ for the appropriate $G$. Thus Theorem \ref{genstickout} includes Theorem \ref{tiltedsperner} as a special case.
 
Another natural question to ask is what happens if instead of restricting the size of $A\backslash B$ between sets $A$ 
and $B$ in different layers of the cube, we restrict `patterns' between such sets. For example, how large can 
$\mathcal A\subset {\mathcal P}[n]$ be if $\mathcal A$ does not contain two sets $A$ and $B$ with $|B\backslash A| = 2|A\backslash B|$ in 
which $a<b$ for every $a\in A\backslash B$ and $b\in B\backslash A$? This condition is a substantial restriction of the $1:2$-tilted 
Sperner condition from \cite{LL}. Does this still force $|\mathcal A|=o(2^n)$?

Our second result gives a positive answer to this question. It shows that this much weaker condition 
gives almost the same upper bound on $|{\mathcal A}|$ as given by the $1:2$-tilted Sperner condition.

\begin{thm}
\label{ordertilted}
Suppose that $\mathcal A\subset \mathcal {P}[n]$ does not contain sets $A$ and $B$ with $|B\backslash A| = 
2|A\backslash B|$ in which $a<b$ for every $a\in A\backslash B$ and $b\in B\backslash A$. Then $|{\mathcal A}| \leq 
C e^{120(\log n)^{1/2}}2^n/n^{1/2}$ where $C>0$ is an absolute constant.
\end{thm}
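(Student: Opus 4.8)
\textbf{Proof proposal for Theorem~\ref{ordertilted}.}

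The plan is to reduce the ``ordered'' $1{:}2$-tilted Sperner condition to a genuine $(G,\mathbf{x})$-Sperner condition on a sub-structure, and then apply Theorem~\ref{genstickout}, paying the price of a slightly weakened error term. First I would restrict attention to the middle layers: by Chernoff's inequality, all but a $C e^{-c(\log n)}$-type fraction of $\mathcal{P}[n]$ lies within $O((n\log n)^{1/2})$ of layer $n/2$, so up to the stated error factor we may assume every $A\in\mathcal{A}$ satisfies $|A|\in[n/2 - K(n\log n)^{1/2}, n/2 + K(n\log n)^{1/2}]$ for a suitable constant $K$; this is exactly the regime in which the hypothesis $x_{ij}\le n/2 - 9(n\log n)^{1/2}$ of Theorem~\ref{genstickout} can be met, which is why the $e^{120(\log n)^{1/2}}$ factor (rather than something smaller) appears in the conclusion.

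The heart of the argument is to pass from the ordering condition ``$a<b$ for all $a\in A\setminus B$, $b\in B\setminus A$'' to an unordered size condition on a random sub-cube. The standard device is to pick a uniformly random permutation $\pi$ of $[n]$ (equivalently, a random relabelling of the ground set), or alternatively to split $[n]$ into a random initial segment $L$ and final segment $R=[n]\setminus L$ with $|L|$ concentrated around a chosen value. The point is that if $A,B$ violate the ordered condition, then $A\setminus B$ sits entirely ``below'' $B\setminus A$ in the linear order, so $A\setminus B$ tends to land in $L$ and $B\setminus A$ in $R$; more usefully, one conditions on the sets $A\cap L$, $A\cap R$ etc.\ and observes that within a fixed ``column'' (fixing the trace on one side) the forbidden configurations become pairs at a fixed ratio in a sub-cube, i.e.\ an instance of the $x_{ij}$-restriction $|A\setminus B|\neq j-i$ to which Theorem~\ref{genstickout} applies. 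Carrying this out, one writes $|\mathcal{A}| = \mathbb{E}_\pi[\,\cdot\,]$ or sums over columns, bounds each column by $w(G)$ for the appropriate tilted-Sperner graph $G$ on the relevant layer-index set, and then uses the fact (noted in the text) that for the $1{:}2$-tilted Sperner graph $w(G) = (1+o(1))\binom{n}{n/2}$, together with $\binom{n}{n/2} = \Theta(2^n/n^{1/2})$, to collect the final bound $Ce^{120(\log n)^{1/2}}2^n/n^{1/2}$.

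The main obstacle, I expect, is the combinatorial bookkeeping in the ``random split / column'' step: one must choose the split so that (i) with high probability every pair forbidden by the ordered condition becomes forbidden by a pairwise restriction inside some column, yet (ii) the columns themselves are honest copies of $\mathcal{P}[m]$ on which the relevant $x_{ij}$ all satisfy $x_{ij}\le m/2 - 9(m\log m)^{1/2}$, so that Theorem~\ref{genstickout} is genuinely applicable with a negligible loss. Balancing these forces the $9(n\log n)^{1/2}$ slack in Theorem~\ref{genstickout} against the concentration window, and propagating the resulting constants is what produces the explicit $120$ in the exponent. A secondary technical point is handling sets $A,B$ with $|A\setminus B|=|B\setminus A|=0$ or with very unbalanced intersection with $L$ — these are either trivial or fall outside the middle-layer regime and are absorbed into the error term. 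Once the reduction is set up cleanly, the rest is a direct appeal to Theorem~\ref{genstickout} plus routine estimates on binomial coefficients.
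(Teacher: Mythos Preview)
Your plan has a genuine gap: the ordered $1{:}2$ condition is \emph{not} reducible to a $(G,\mathbf{x})$-Sperner condition on any sub-cube or column, and Theorem~\ref{genstickout} is never invoked in the paper's argument. The $(G,\mathbf{x})$ framework forbids \emph{every} pair $A\in\mathcal{A}^{(i)}$, $B\in\mathcal{A}^{(j)}$ with $|A\setminus B|=x_{ij}$; the ordered hypothesis forbids only those pairs in which $A\setminus B$ lies entirely below $B\setminus A$, which for $|A\setminus B|=k$ is a $\binom{3k}{k}^{-1}$-fraction of all such pairs. A random permutation does not repair this: after relabelling, the forbidden pairs are still those ordered with respect to $\pi$, not all pairs at the given ratio. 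Your ``column'' picture has the same problem --- fixing the trace on one side of a split leaves the differences on the other side, and the ordering constraint persists there. No averaging over splits or permutations recovers an unordered restriction, because the loss is exponential in $k$.

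The paper's actual proof is a direct Katona-type averaging, not a reduction. One fixes the deterministic split $[n/3]\cup[n/3{+}1,n]$, chooses random $U\subset[n/3]$ and $V\subset[n/3{+}1,n]$ of sizes $K=n^{1/2}/12$ and $2K$, and builds a random family $\mathcal{C}=\{C_0,\ldots,C_K\}$ in which $C_k$ and $C_l$ differ by $|l-k|$ elements of $U$ and $2|l-k|$ elements of $V$; since $U$ sits below $V$, every pair in $\mathcal{C}$ is forbidden, so $|\mathcal{A}\cap\mathcal{C}|\le 1$. Averaging yields $\sum_k\sum_B\mathbb{P}(C_k=B)\le 1$, and the work is to show $\mathbb{P}(C_k=B)\gtrsim 1/(\alpha(n)\,|[n]_{i,j}|)$ uniformly over a ``zone'' $[n]_{i,j}$ of width $\Theta(n^{1/2})$ in the parameters $(|B\cap[n/3]|,|B\cap[n/3{+}1,n]|)$. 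The $e^{120(\log n)^{1/2}}$ factor comes from the ratio of hitting probabilities across a zone (Lemma~\ref{lemma for ordered tilted}), not from the $9(n\log n)^{1/2}$ slack in Theorem~\ref{genstickout} --- your diagnosis of the constant is incorrect as well.
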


Note that the bound in Theorem \ref{ordertilted} shows that $|{\mathcal A}|\leq 2^n/n^{1/2 + o(1)}$, which up to 
the $o(1)$ term is the size of the largest tilted Sperner family. It would be interesting to know whether the 
$e^{120(\log n)^{1/2}}$ factor above can also be removed.

We will give two proofs of this result. The first gives a short proof using the density Hales-Jewett theorem but 
consequently gives an extremely weak upper bound on $|\mathcal A|$. The second proof is more involved but improves 
this to give the bound stated in Theorem \ref{ordertilted}. 

The proof of Theorem \ref{genstickout} is given in Section 2, followed by the proofs of Theorem \ref{ordertilted} in Section 3. We conclude with some open problems. Throughout we omit floor and ceiling signs whenever they 
are not crucial for the sake of clarity. Our notation is standard. We write $[n]$ for the set $\{1,\ldots, n\}$ and $[a,b]$ for the interval $\{a,a+1,\ldots ,b\}$. For a set $S$, ${\mathcal P}(S)$ denotes the power set of $S$ and $S^{(k)} = \{A\subset S: |A|=k\}$ denotes the $k$-sets of $S$.

%
%
%
%
%
%
%
%
%
%
%

\section{$(G, \mathbf{x})$-Sperner families}

Let $T$ be a set of size $t$. Two sets $A_1, A_2 \in T^{(t/2)}$ are said to be \emph{neighbours} if $|A_1 \triangle A_2| = 2$. Note that any set $A \in T^{(t/2)}$ has $t^2/4$ neighbours in $T^{(t/2)}$. Our first lemma shows that if $\mathcal B \subset T^{(t/2)}$ is large then ${\mathcal B}$ contains a large subset ${\mathcal E}$ such that all elements of ${\mathcal E}$ contain many neighbours in ${\mathcal E}$.

\begin{lem}
	\label{neighbourbound}
	Let $T$ be a set of size $t$. Suppose that ${\mathcal B} \subset 
	T^{(t/2)}$. 
	Then, given any $\alpha \in [4/t,1]$, there exists a set 
	${\mathcal E} \subset {\mathcal B}$ with 
	$|{\mathcal E}| > |{\mathcal B}| - \alpha \binom {t}{t/2}$ so that 
	all $E \in {\mathcal E}$ have at least $\alpha t^2/2^5$
	neighbours in ${\mathcal E}$.
\end{lem}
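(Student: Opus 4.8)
The plan is to build $\mathcal{E}$ by greedy peeling: starting from $\mathcal{B}$, repeatedly delete any set that currently has fewer than $\alpha t^2/2^5$ neighbours among the sets that remain, and let $\mathcal{E}$ be the family that survives. By construction every $E \in \mathcal{E}$ has at least $\alpha t^2/2^5$ neighbours in $\mathcal{E}$, and $|\mathcal{E}| = |\mathcal{B}| - |\mathcal{D}|$ where $\mathcal{D} := \mathcal{B} \setminus \mathcal{E}$ is the deleted family; so the whole task reduces to showing $|\mathcal{D}| < \alpha\binom{t}{t/2}$. I would do this by sandwiching the number of edges spanned by $\mathcal{D}$ in the neighbour graph $N$ on $T^{(t/2)}$, which is $d$-regular with $d = t^2/4$.

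For the upper bound on $e(N[\mathcal{D}])$: list $\mathcal{D} = \{v_1, \ldots, v_m\}$ in the order the sets were deleted. When $v_i$ was deleted it had fewer than $\alpha t^2/2^5$ neighbours among the then-surviving sets, in particular fewer than $\alpha t^2/2^5$ among $\{v_{i+1}, \ldots, v_m\}$; summing over $i$ gives $e(N[\mathcal{D}]) < m\alpha t^2/2^5$.

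For the lower bound I would use a clique decomposition of $N$. Every edge $\{A_1, A_2\}$ of $N$ lies in a unique clique $K_C = \{C \cup \{x\} : x \notin C\}$ indexed by the common $(t/2-1)$-set $C = A_1 \cap A_2$; there are $M = \binom{t}{t/2-1} = \frac{t}{t+2}\binom{t}{t/2}$ such cliques, each of size $t/2+1$, and each set of $T^{(t/2)}$ lies in exactly $r = t/2$ of them. Hence $e(N[\mathcal{D}]) = \sum_C \binom{|\mathcal{D} \cap K_C|}{2}$ with $\sum_C |\mathcal{D} \cap K_C| = r|\mathcal{D}| = (t/2)m$, so convexity of $\binom{\cdot}{2}$ gives $e(N[\mathcal{D}]) \ge M\binom{(t/2)m/M}{2}$. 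Comparing this with the upper bound and using $\binom{x}{2} \ge x^2/4$ for $x \ge 2$ forces $m < \frac{\alpha}{2}\binom{t}{t/2}$; the remaining case $(t/2)m/M < 2$ gives $m < 4\binom{t}{t/2}/(t+2) \le \alpha\binom{t}{t/2}$ directly, using the hypothesis $\alpha \ge 4/t$. Either way $|\mathcal{D}| < \alpha\binom{t}{t/2}$, as needed.

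The only real content is the lower bound on $e(N[\mathcal{D}])$: an arbitrary $d$-regular graph can contain large sparse (even independent) sets, so the argument genuinely needs the structural fact that the edge set of the neighbour graph is covered by only $O(\binom{t}{t/2})$ cliques — this is what forces a family $\mathcal{D}$ of density $\beta$ to span roughly a $\beta^2$-fraction of the edges, hence to be small once it has been fully peeled. After that the proof is bookkeeping with $d = t^2/4$, $M = \frac{t}{t+2}\binom{t}{t/2}$, $r = t/2$, together with the side condition $\alpha \ge 4/t$ to absorb the low-density case; I expect no conceptual difficulty there beyond making the numerical constants land inside $\alpha\binom{t}{t/2}$ rather than a larger multiple of it.
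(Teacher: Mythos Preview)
Your proof is correct and rests on the same core idea as the paper's: bounding the number of neighbour-pairs inside $\mathcal{D}$ via convexity over a clique decomposition of the Johnson graph $J(t,t/2)$. The differences are cosmetic. You index the cliques by $(t/2-1)$-subsets $C=A_1\cap A_2$, whereas the paper indexes by $(t/2+1)$-supersets $A\supset A_1,A_2$; by complementation these give the same count. For the construction of $\mathcal{E}$, you take the greedy-peeling residue and read off the upper bound $e(N[\mathcal{D}])<m\,\alpha t^2/2^5$ from the deletion order; the paper instead takes a \emph{maximal} subfamily with the required minimum-degree property, shows (by the same convexity bound) that if $\mathcal{D}$ were large it would have average degree at least $\alpha t^2/2^4$ in $N$, and then invokes the half-average-degree subgraph lemma to extract $\mathcal{S}\subset\mathcal{D}$ with minimum degree $\alpha t^2/2^5$, contradicting maximality of $\mathcal{E}$. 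Your route is marginally more direct since it avoids that auxiliary lemma, and your case split on $(t/2)m/M\gtrless 2$ makes explicit where the hypothesis $\alpha\ge 4/t$ enters (the paper uses it at the same spot, in the inequality $\gamma t/2-1\ge\gamma t/4$).
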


\begin{proof}
Let ${\mathcal E}$ be a maximal subset of ${\mathcal B}$ with the property that every $E\in {\mathcal E}$ has at least $\alpha t^2/2^5$ neighbours in ${\mathcal E}$ and set ${\mathcal D} =  {\mathcal B} \setminus {\mathcal E}$. Let $|{\mathcal D}| = \gamma \binom {t}{t/2}$ and for contradiction suppose that $\gamma \geq \alpha $. 

Given a set $A\in T^{(t/2+1)}$, let $y_A$ denote the number of sets $D\in {\mathcal D}$ contained in $A$. Double counting we have 
\begin{align*}
\sum _{A\in T^{(t/2+1) } }   y_A 
& = \sum _{A \in T^{(t/2 +1)}} \sum _{D \in {\mathcal D}: D \subset A} 1 
= \sum _{D \in {\mathcal D}} \sum _{A \in {T^{(t/2 +1)}: D \subset A}} 1\\
&= \frac{t}{2} | {\mathcal D} | 
=   \frac{\gamma t}{2}  \binom {t}{t/2} > \frac{\gamma  t}{2} \binom {t}{t/2+1}.
\end{align*}
Now note that for each pair $\{B,B'\}$ of neighbours in ${\mathcal D}$ there exists a unique element $A \in T^{(t/2+1)}$ containing $B$ and $B'$. This shows that
\begin{equation*}
| \big \{ \{B,B'\}: B,B'\mbox{ are neighbours in }{{\mathcal D}} \big \}| 
= \sum _{A\in T^{(t/2+1)} }\binom {y_A}{2}.
\end{equation*}
By the convexity of $\binom {x}{2}$ we therefore have
\begin{align}
\begin{split}
\label{countpairsnew} 
| \big \{ \{B,B'\}: B,B'\mbox{ are neighbours in }{{\mathcal D}} \big \}| 
&\geq \binom {\gamma  t/2}{2} \binom {t}{t/2+1} \\
&\geq \frac {(\gamma  t/2)(\gamma t/2 - 1)}{2^2} \binom {t}{t/2}\\
&\geq \frac {\gamma t^2}{2^5} |{\mathcal D}|.
\end{split}
\end{align}
The final inequality here holds since $\gamma t/2 - 1 \geq \gamma t/4$ for $\gamma \geq 4/t$.

Now view ${\mathcal D}$ as the vertices of a graph in which elements are joined if they are neighbours. By \eqref{countpairsnew} we see that the average degree of this graphs is at least $\gamma  t^2/2^4$. But any graph with average degree $d$ contains a subgraph with minimum degree at least $d/2$ (obtained by repeatedly removing vertices of degree less than $d/2$). This gives a non-empty subset $\mathcal S$
of ${\mathcal D}$ in which every element has at least $\gamma t^2/2^5 \geq \alpha t^2/2^5$ neighbours. However ${\mathcal E} \cup {\mathcal S}$ is a subset of ${\mathcal B}$ in which all elements have at least $\alpha t^2/2^5$ neighbours, contradicting the maximality of ${\mathcal E}$. Therefore $\gamma < \alpha$, as claimed. 
\end{proof}

We now give an overview of the proof of Theorem \ref{genstickout}. Let $G$ and 
${\mathbf{x}}$ be as in the statement of Theorem \ref{genstickout} and let $\A \subset {\mathcal P}[n]$ with $|{\mathcal A}| > w(G) + C2^n/n^{2/3}$. We wish to show that there is some edge $ij$ of $G$ with $i<j$, and sets $A_i \in \A ^{(i)}, A_j \in \A ^{(j)}$ such that $|A_i \setminus A_j| = x_{ij}$. To do this we will proceed in two steps. In the first step we find a maximal chain ${\mathcal C} = \{C_i: i\in [0,n]\} \subset {\mathcal P}[n]$, with $|C_i| = i$ for all $i$, with two properties. The first is that there is a `large' subset $\{C_i: i\in I\}$ with $I \subset [0,n]$ of elements in $\A \cap {\mathcal C}$. Here `large' will not mean with respect to the size of $I$, but with respect to a certain weighted measure of $I$. This property will be used to find an edge $ij$ of $G$ with $i<j$ such that $i,j\in I$. The second property we will need from ${\mathcal C}$ is that each element $C_i$ with $i\in I$ satisfies certain local density conditions in ${\mathcal A}^{(i)}$. The second step of the argument then uses these local density conditions to find a set $A_i \in \A ^{(i)}$ which is close to $C_i$ with $|A_i\setminus C_j| = x_{ij}$.

The first step of the argument will be carried out in the following lemma which locates the chain ${\mathcal C}$ mentioned above. In the statement of the lemma \emph{(i)} and \emph{(ii)} correspond to the property that $I$ is `large' mentioned above. The slightly technical \emph{(iii)}, \emph{(iv)} and \emph{(v)} then correspond to the local density property mentioned above. Each of these will be used to deal with a different range of $x_{ij}$. The reader may find it helpful to skip the proof of the lemma on first reading to see how the conditions \emph{(i)}-\emph{(v)} are used in the proof of Theorem \ref{genstickout}.

Below we make the convention that given a set $A\subset [n]$ and a permutation $\sigma \in S_n$, $\sigma (A)$ denotes the set $\{ \sigma (a): a\in A\}$. 

\begin{lem}
 \label{configuration}
 Given any family ${\mathcal A} \subset {\mathcal P}[n]$ there exist
 \begin{itemize}
 	\item sets $S\subset T\subset [n]$ with $|S| = s = 4n^{3/5}$, 
 	$|T| = t = n - 2(n\log n)^{1/2}$; 
 	\item a maximal chain ${\mathcal C} = \{C_i: i\in [0,n]\}\subset 
 	{\mathcal P}[n]$  with $|C_i| = i$ for all $i$; 
 	\item a set $I \subset [0,n]$;
 \end{itemize}
 with the following properties.
\begin{enumerate}[(i)]
 \item $C_i\in {\mathcal A}$ for all $i\in I$.

 \item $\sum _{i\in I} \binom {n}{i} \geq |{\mathcal A}| - \frac{2^{10}}{n^{2/3}}2^n$.

 \item There is $F_0\in S^{(s/2)}$ such that $C_i \cap S = F_0$
       for all $i\in I$ (we will have $i\geq s/2$ for $i\in I$). Furthermore, for each such $i$ there is a family 
       ${\mathcal F}_i \subset S^{(s/2)}$ with $|{\mathcal F}_i| \geq 
       \frac{1}{n} \binom {s}{s/2}$ such that 
       $(C_i\setminus F_0)\cup F \in {\mathcal A}$ for all 
       $F\in {\mathcal F}_i$. 

 \item There is $D_0 \in T^{(t/2)}$ such that $C_i \cap T = D_0$ for all $i\in I$ (we will have $i\geq t/2$ for $i\in I$). Furthermore, for each such $i$ there is a family 
       ${\mathcal D}_i \subset T^{(t/2)}$ with $|{\mathcal D}_i| \geq \frac{1}{n} 
       \binom {t}{t/2}$ such that $(C_i\setminus D_0)\cup D \in {\mathcal A}$ for all 
       $D\in {\mathcal D}_i$. 

 \item For all $i\in I$, every element of ${\mathcal D}_i$ has at least $n^{4/3}$ neighbours 
       in ${\mathcal D}_i$.
\end{enumerate}

\end{lem}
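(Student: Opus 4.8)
The plan is to build the chain $\mathcal{C}$ by a random/averaging argument, choosing a uniformly random maximal chain (equivalently a random permutation $\sigma\in S_n$ giving the chain $C_i=\{\sigma(1),\dots,\sigma(i)\}$) and then showing that with positive probability one can extract the desired $I$, $S$, $T$, and the auxiliary families $\mathcal{F}_i,\mathcal{D}_i$. The starting point for property (i)--(ii) is the classical weighted LYM/Bollob\'as–style observation: for a uniformly random maximal chain, $\mathbb{E}\big[\sum_{i:\,C_i\in\mathcal{A}}\binom{n}{i}^{-1}\big]$ can be rewritten via $|\mathcal{A}| = \sum_A \binom{n}{|A|}^{-1}\binom{n}{|A|}$, so that on average the ``weight'' $\sum_{i\in I}\binom{n}{i}$ picked up along the chain is exactly $\sum_{A\in\mathcal{A}} \binom{n}{|A|}\cdot\Pr[A\in\mathcal{C}] = |\mathcal{A}|$. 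Hence there is a chain with $\sum_{i:\,C_i\in\mathcal{A}}\binom{n}{i}\ge |\mathcal{A}|$; the point of (ii) is to get this simultaneously with the local-density conditions, losing only the additive $O(2^n/n^{2/3})$ term.

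The second ingredient is the local density conditions (iii)--(v). Here I would fix the sets $S\subset T\subset[n]$ first (say $S=[s]$, $T=[t]$ with $s=4n^{3/5}$, $t=n-2(n\log n)^{1/2}$), and think of each layer $\mathcal{A}^{(i)}$ restricted to a coset of the subcube on $S$ (resp.\ $T$): writing a set $A$ with $|A|=i$ as $(A\cap S^c)\sqcup(A\cap S)$, the condition ``$C_i\cap S=F_0$ and there are $\ge\frac1n\binom{s}{s/2}$ choices of $F$ with $(C_i\setminus F_0)\cup F\in\mathcal{A}$'' says that the slice of $\mathcal{A}$ above the ``outer part'' $C_i\setminus F_0$ is a dense subset of the middle layer $S^{(s/2)}$. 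I would average over $F_0$ (and over the outer part) to show that, apart from a small fraction of the measure of $\mathcal{A}$, every $A\in\mathcal{A}$ lies in such a dense slice; a random chain then passes through the dense part with the right weight. The neighbour condition (v) is then immediate from Lemma~\ref{neighbourbound} applied with $T$ the set above, ${\mathcal B}=\mathcal{D}_i$, and $\alpha$ chosen of order $1/n$: since $|\mathcal{D}_i|\ge\frac1n\binom{t}{t/2}$, discarding $\alpha\binom{t}{t/2}$ elements with $\alpha\asymp 1/(2n)$ still leaves $\ge\frac1{2n}\binom{t}{t/2}$ sets each with at least $\alpha t^2/2^5\asymp t^2/n\gg n^{4/3}$ neighbours (using $t=n-o(n)$), and we can replace $\mathcal{D}_i$ by this subset while keeping $|\mathcal{D}_i|\ge\frac1n\binom{t}{t/2}$ after adjusting constants.

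Concretely the steps in order are: (1) for a uniform random permutation $\sigma$, record the indicator that $C_i\in\mathcal{A}$, that $C_i\cap S$ and $C_i\cap T$ extend to dense slices, and compute the expected weighted count; (2) use Markov/first-moment on the ``bad'' events (wrong intersection size with $S$ or $T$, sparse slice, $C_i\notin\mathcal{A}$) to show the expected weight lost to bad $i$ is $O(2^n/n^{2/3})$ — this is where the quantitative gap between $|\mathcal{A}|$ and $w(G)$ and the exact power $n^{2/3}$ must be tracked, balancing the various error terms coming from the sizes of $S$ and $T$ and the density threshold $1/n$; (3) fix a chain achieving the expectation and read off $I$, $F_0$, $D_0$, and the families $\mathcal{F}_i,\mathcal{D}_i$; (4) apply Lemma~\ref{neighbourbound} to upgrade $\mathcal{D}_i$ to satisfy (v). The main obstacle I expect is step (2): one has to choose all the parameters ($s$, $t$, the $\frac1n$ density thresholds, the allowed deviation of $|C_i\cap S|$ and $|C_i\cap T|$ from $s/2$ and $t/2$) so that each source of lost weight is $O(2^n/n^{2/3})$, and in particular one needs the typical set $A\in\mathcal{A}$ to have $|A\cap S|\approx s/2$ and $|A\cap T|\approx t/2$ with enough room that a random extension stays in the middle layer — this is a Chernoff estimate, but getting the constants to line up with the clean statement of the lemma is the delicate part.
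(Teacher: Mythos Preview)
Your overall plan (random chain, weighted LYM count, then auxiliary density structures via Lemma~\ref{neighbourbound}) is in the right spirit, but there are two genuine gaps.

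First, your step (4) cannot work as written. To get $n^{4/3}$ neighbours from Lemma~\ref{neighbourbound} you need $\alpha t^2/2^5\ge n^{4/3}$, i.e.\ $\alpha\gtrsim n^{-2/3}$, not $\alpha\asymp 1/n$. But if you only know $|\mathcal D_i|\ge \tfrac1n\binom{t}{t/2}$ and then discard $\alpha\binom{t}{t/2}$ with $\alpha\asymp n^{-2/3}\gg 1/n$, nothing is left. The paper fixes this by reversing the order: it first requires the $T$-slice density to exceed $2^7/n^{2/3}$ (this is the source of the $n^{-2/3}$ in the error term of (ii)), then applies Lemma~\ref{neighbourbound} with $\alpha=2^6/n^{2/3}$ to obtain $\mathcal D_i$ with density still $\ge 2^6/n^{2/3}>1/n$ and the $n^{4/3}$-neighbour property simultaneously. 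So Lemma~\ref{neighbourbound} is applied \emph{before} fixing $\mathcal D_i$, not as a post-processing step.

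Second, a uniformly random maximal chain does not deliver the structural requirement that $C_i\cap T=D_0$ and $C_i\cap S=F_0$ are \emph{the same} set for every $i\in I$: along a random chain, $C_i\cap T$ changes with $i$, and the chance that it stays constant over an interval of length $|I_0|=2(n\log n)^{1/2}$ is negligible. The paper does \emph{not} use a random chain. Instead it chooses a random bijection $\sigma:[n]\to[n]$, sets $T=\sigma([t])$, and for each $i\in I_0$ looks at sets of the form $\sigma(B\cup[t+1,t/2+i])$ with $B\in[t]^{(t/2)}$; this builds $C_i$ so that $C_i\cap T$ is literally $\sigma(B)$ for a single fixed $B$, independent of $i$. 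A second random bijection $\pi\in S_t$ is then used to nest $S$ inside $T$ and fix $F_0$ in the same way. The chain is thus \emph{constructed} to have constant trace on $S$ and $T$ across $I$; it is not a generic chain that happens to have this property.
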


\begin{proof}
We may assume that $n>2^{15}$ as otherwise, taking $I=\emptyset$ the statement of the lemma is vacuous. Let us set $n_1 = 2 (n\log n)^{1/2}$ so that $n = t + n_1$ and let $I_0 = [n/2-(n\log n)^{1/2}, n/2+(n\log n)^{1/2}] = [t/2, t/2 + n_1] \subset [n]$. We will restrict to those elements $\widetilde {\mathcal A} \subset {\mathcal A}$ with $\widetilde {\mathcal A} = \{A \in {\mathcal A}: |A| \in I_0\}$. We have $|{\widetilde {\mathcal A}}| \geq |{\mathcal A}| - 2^{n}/n$ by  Chernoff's inequality (see Appendix A, \cite{aands}). We let $|\widetilde {\mathcal A}^{(i)}| = \alpha _i \binom {n}{i}$.

To begin, choose a permutation $\sigma \in S_n$ uniformly at random. Note that for $B \in [1,t]^{(t/2)}$ and $i\in I_0$ we have $|\sigma (B \cup [t+1,i+t/2])| = i$  (here we take $[t+1,t]= \emptyset$ when $i = t/2$). Let ${\mathcal B}_i \subset {[1,t]}^{(t/2)}$ denote those sets $B \in {[1,t]}^{(t/2)}$ with $\sigma (B \cup [t+1,i + t/2]) \in \widetilde {\mathcal A}^{(i)}$ and write 
$|{\mathcal B}_i| = \beta _i \binom {t}{t/2}$. Also let $X_i$ denote the random variable given by 
\[X_i = \left\{ \begin{array}{ll}
         |{\mathcal B}_i| & \mbox{if $\beta _i > \frac{2^7}{n^{2/3}}$};\\
        0 & \mbox{otherwise.}\end{array} \right. \]

We claim that $\mathbb {E}(X_i) \geq (\alpha _i - 2^7/n^{2/3})\binom {t}{t/2}$. Indeed, as $\sigma $ is chosen uniformly at random, $\sigma (B \cup [t+1,t/2 +i])$ is equally likely to be any set in $[n]^{(i)}$, giving
 \begin{equation*}
  \mathbb{P}(\sigma (B\cup [t + 1, t/2 + i]) \in {\widetilde \A }^{(i)}) = \frac {|{\widetilde \A }^{(i)}|}
  {\binom {n}{i}} = \alpha _i.
 \end{equation*}
This gives that $ \mathbb{E}(|{\mathcal B}_i|) = \sum _{B \in [1,t]^{(t/2)}} \mathbb {P}(\sigma (B\cup [t + 1, t/2+i]) \in {\widetilde \A} ^{(i)}) = \alpha _{i} \binom {t}{t/2}$. Using that $X_i \geq |{\mathcal B}_i| - \frac{2^7}{n^{2/3}}\binom {t}{t/2}$ for all $i$ then gives 
 \begin{equation}
  \label{expectationofXi} 
  \mathbb{E}(X_i) \geq  \alpha _{i} \binom{t}{t/2} - \frac{2^7}{n^{2/3}} \binom{t}{t/2},
 \end{equation}
 proving the claim.

 In order to guarantee \emph{(ii)} we will make a choice of $\sigma $ according to a certain weighted function. Let $Z$ denote the random variable $Z = \sum _{i\in I_0} \frac {\binom {n}{i}} {\binom {t}{t/2}}X_i$. Using linearity of expectation and \eqref{expectationofXi} we have
\begin{align}
\begin{split}
\label{Zexpectation}
 \mathbb{E}(Z) & = \sum _{i\in I_0} \frac{\binom {n}{i}}{\binom {t}{t/2}} {\mathbb E}(X_i) \geq \sum _{i\in I_0} (\alpha _{i} - \frac{2^7}{n^{2/3}})\binom {n}{i} \\
  & = \sum _{i\in I_0} |\widetilde {\mathcal A}_i| - \frac{2^7}{n^{2/3}} \binom {n}{i} \geq 
 |\widetilde {\mathcal A}| - \frac{2^{n+7}}{n^{2/3}} \geq |{\mathcal A}| - \frac{2^{n+8}}{n^{2/3}}.
 \end{split}
\end{align}
Now fix a choice of $\sigma \in S_n$ so that $Z(\sigma ) \geq {\mathbb E}(Z)$. Take $I_1 \subset I_0$ to consist of those $i\in I_0$ with $X_i \neq 0$. By \eqref{Zexpectation} this gives 
\begin{equation}
 \label{weightedsummationB}
	\sum _{i\in I_1} \beta _i \binom {n}{i} 
	= \sum _{i\in I_0} \frac {\binom {n}{i}}{\binom {t}{t/2}}X_i(\sigma ) = Z(\sigma ) \geq {\mathbb E}(Z) \geq |{\mathcal A}| - \frac{2^{n+8}}{n^{2/3}}.
\end{equation} 
Furthermore, by definition of $X_i$ we have $\beta _i > \frac{2^7}{n^{2/3}}$ for $i\in I_1$.  

We now use Lemma \ref{neighbourbound}, taking $\alpha = 2^6/n^{2/3}$, to find a set ${\mathcal E}_i \subset {\mathcal B}_i$ such that all elements of ${\mathcal E}_i$ have many neighbours in ${\mathcal E}_i$. This gives a family ${\mathcal E}_i \subset {\mathcal B}_i \subset [1,t]^{(t/2)}$ with $|{\mathcal E}_{i}| = \delta _i\binom {t}{t/2}$ satisfying
\begin{equation}
	\label{delta_i density bound}
|{\mathcal E}_i| = \delta _i \binom {t}{t/2} \geq |{\mathcal B}_i| - \frac {2^6}{n^{2/3}} \binom {t}{t/2} = (\beta _i - \frac{2^6}{n^{2/3}})\binom {t}{t/2} \geq \frac{2^6}{n^{2/3}}\binom {t}{t/2},
\end{equation} 
as $\beta _i > 2^{7}/{n^{2/3}}$ for all $i\in I_1$ and so that each $E \in {\mathcal E}_i$ has at least $\alpha t^2 /2^5 \geq 2(n - 2(n\log n)^{1/2})^2/n^{2/3} \geq n^{4/3}$ neighbours in ${\mathcal E} _i$.

Now let $n_2$ be such that $t = s + 2n_2$. Choose a permutation 
$\pi\in S_t$ of the elements of $[1,t]$ uniformly at random.
For each $i\in I_1$, we write ${\mathcal G}_i \subset [1,s]^{(s/2)}$ for the 
collection of sets $G \in [1,s]^{(s/2)}$ such that $\pi (G \cup [s+1,s+n_2]) \in {\mathcal E}_i$ -- note again that $|\pi (G \cup [s+1,s+n_2])| = t/2$ for all $G \in [1,s]^{(s/2)}$. As $\pi $ is chosen uniformly at random we have 
\begin{equation*}
	{\mathbb E}(|{\mathcal G}_i|) = \frac {|{\mathcal E}_i|}{\binom {t}{t/2}}\binom {s}{s/2} = \delta _i \binom {s}{s/2}.
\end{equation*}
For each set $G \in [1,s]^{(s/2)}$ let $Y_{i,G}$ denote the indicator random variable given by
\[Y_{i,G} = \left\{ \begin{array}{ll}
         1 & \mbox{if $G \in {\mathcal G}_i$ and $|{\mathcal G}_i| \geq \frac {1}{n} \binom {s}{s/2}$};\\
        0 & \mbox{otherwise.}\end{array} \right. \]

We claim that ${\mathbb E}(Y_{i,[1,s/2]}) \geq \delta _i - \frac {1}{n}$. Indeed, as $\pi \in S_t$ is chosen uniformly at random we have $\mathbb {E}(Y_{i,[1,s/2]}) = \mathbb {E}(Y_{i,G})$ for all $G \in [1,s]^{(s/2)}$. Therefore
\begin{equation*}
	\label{Yi expectation}
	\binom {s}{s/2} {\mathbb E}(Y_{i,[1,s/2]})  = 
	\sum _{G \in [1,s]^{(s/2)}} {\mathbb E}(Y_{i,G})
	 \geq 
	{\mathbb E}(|{\mathcal G}_i|) - \frac{1}{n} \binom {s}{s/2}
	 \geq 
	(\delta _i - \frac {1}{n}) \binom {s}{s/2},
\end{equation*}
which after dividing by $\binom {s}{s/2}$ gives the claim. 

Now consider the random variable $W = \sum _{i\in I_1} \binom {n}{i} Y_{i,[1,s/2]}$. By the previous claim, we have 
\begin{align}
 \begin{split}
 \label{Wexpectation}
  \mathbb{E}(W) & = \sum _{i\in I_1} \binom {n}{i} {\mathbb E}(Y_{i,[1,s/2]}) \geq \sum _{i\in I_1} (\delta _i - \frac{1}{n}) \binom {n}{i}\\
  & \geq \sum _{i\in I_1} (\beta _i - \frac{2^6}{n^{2/3}} - \frac{1}{n})\binom {n}{i}
  \geq |{\mathcal A}| - \frac{2^{10}}{n^{2/3}}2^n.
 \end{split}
\end{align}
The second inequality here follows since $\delta _i \geq \beta _i - 2^{6}/{n^{2/3}}$ by \eqref{delta_i density bound} and the third inequality follows by \eqref{weightedsummationB}. Fix a choice of $\pi \in S_t$ such that $W(\pi ) \geq |{\mathcal A}| - \frac{2^{10}}{n^{2/3}}2^n$.

We now define the sets in the statement. Let $S = \sigma ( \pi ([1,s]))$ and $T = \sigma ([1,t])$. Since $\pi \in S_t$ we have $\pi ([1,s]) \subset [1,t]$ and so $S \subset T$. Let $I = \{i\in I_1: Y_{i,[1,s/2]}(\pi) = 1\}$ and set $F_0 = \sigma (\pi ([1,s/2]))$ and $D_0 = F_0 \cup \sigma (\pi ([s+1,s+n_2]))$. For all $i\in I$ let 
\begin{align*}
C_i = F_0 \cup \sigma (\pi ([s+1,s+n_2])) \cup \sigma ([t +1, t/2 + i])  = D_0 \cup \sigma ([t +1, t/2 + i]).
\end{align*} 
Take ${\mathcal C}$ to be any maximal chain extending this partial chain. Lastly, set ${\mathcal F}_i = \sigma (\pi ({\mathcal G}_i)) = \{\sigma (\pi (G)): G \in {\mathcal G}_i\}$ and ${\mathcal D}_i = \sigma ({\mathcal E}_i) = \{\sigma (E): E \in {\mathcal E}_i\}$.

All that now remains is to verify that \emph{(i)}-\emph{(v)} are satisfied for these sets. To see \emph{(i)}, note that by definition of $Y_{i,[1,s/2]}$, $[1,s/2] \in {\mathcal G}_i$ for $i\in I$ and therefore $\pi ([1,s/2]) \cup \pi ([s+1,s +n_2]) \in {\mathcal E}_i \subset {\mathcal B}_i$, giving (by definition of ${\mathcal B}_i$) that $C_i \in {\widetilde {\mathcal A}}^{(i)} \subset {\mathcal A}^{(i)}$. Furthermore, \emph{(ii)} follows from \eqref{Wexpectation} and our choice of $\pi $ since 
\begin{equation*}
	\sum _{i\in I} \binom {n}{i} = \sum _{i\in I_1} \binom {n}{i} Y_{i,[1,s/2]}(\pi ) = W (\pi ) \geq {\mathbb E}(W) \geq |\A | - \frac{2^{10}}{n^{2/3}}2^n.
\end{equation*}
To see \emph{(iii)}, first note that $C_i \cap S = C_i \cap \sigma (\pi ([1,s])) = \sigma (\pi ([1,s/2])) = F_0$. Also, for $F \in {\mathcal F}_i$ we have $F = \sigma (\pi (G))$ for some $G \in {\mathcal G}_i$ and 
\begin{equation*}
(C_i \setminus F_0) \cup F = \sigma ( \pi (G \cup [s + 1, s + 
n_2])) \cup \sigma ([t + 1, t/2 + i]).
\end{equation*} 
By definition of ${\mathcal G}_i$ we have $\pi (G \cup [s + 1, s + 
n_2]) \in {\mathcal E}_i \subset {\mathcal B}_i$ and by definition of ${\mathcal B}_i$ this gives $(C_i \setminus F_0) \cup F \in {\mathcal A}^{(i)}$. As $Y_{i,[1,s/2]} = 1$ for all $i\in I$, we also have $|{\mathcal F}_i| = |\sigma (\pi ({\mathcal G}_i))| = |{\mathcal G}_i| \geq \frac {1}{n} \binom {s}{s/2}$, which gives \emph{(iii)}. To see \emph{(iv)} note that that $C_i \cap T = \sigma (\pi ([1,s/2] \cup [s+1,s+n_2])) = D_0$ and if $D \in {\mathcal D}_i$ with $D = \sigma (E)$ for some $E\in {\mathcal E}_i$ we have $(C_i \setminus D_0) \cup D = \sigma (E) \cup \sigma ([t+1,t/2 +i]) \in \A ^{(i)}$. We also have $|{\mathcal D}_i| = |\sigma ({\mathcal E}_i)| = \delta _i\binom {t}{t/2} \geq 2^6/n^{2/3} \binom{t}{t/2} > 1/n\binom {t}{t/2}$ completing \emph{(iv)}. Lastly, by construction \emph{(v)} holds for the family ${\mathcal E}_i$ and therefore also holds for $\sigma ({\mathcal E}_i) = {\mathcal D}_i$. This completes the proof of the lemma.
\end{proof}

The proof of Theorem \ref{genstickout} will also make use of the following 
powerful theorem of Frankl and R\"odl from \cite{forbid} (see Theorem 1.4 in \cite{forbid}, or \cite{kl} for an alternative proof).

\begin{thm}[Frankl and R\"odl]
\label{franklrodl}
{
Let $0<\eta <1/4$ and let $l$ be an integer with $\eta n \leq l \leq (1/2 - \eta )n$. Suppose that ${\mathcal A}, {\mathcal B}\subset {\mathcal P}[n]$ with $|A\cap B| \neq l$ for all $A\in {\mathcal A}$, $B\in {\mathcal B}$. Then $|{\mathcal A}||{\mathcal B}|\leq (4-\epsilon )^n$, where $\epsilon = \epsilon (\eta ) > 0$.
}
\end{thm}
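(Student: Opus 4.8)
The plan is to deduce the theorem from its bi-uniform special case — where $\mathcal A$ and $\mathcal B$ live in single layers near the middle — and then to attack the bi-uniform case through the $S_n$-representation structure of the ``intersection exactly $l$'' operator, falling back on the original probabilistic argument of Frankl and R\"odl for the exponential gain that the naive spectral bound cannot supply.

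First I would split by layers and throw away the extremes. Writing $\mathcal A = \bigcup_a \mathcal A^{(a)}$ and $\mathcal B = \bigcup_b \mathcal B^{(b)}$ we have $|\mathcal A||\mathcal B| \le (n+1)^2 \max_{a,b} |\mathcal A^{(a)}||\mathcal B^{(b)}|$, so it suffices to bound each $|\mathcal A^{(a)}||\mathcal B^{(b)}|$ by $(4-\epsilon)^n$. Fix $c = c(\eta) > 0$ small. If $|a - n/2| \ge cn$ then $\binom na \le (1-\delta)^n 2^n$ for some $\delta = \delta(c) > 0$ (binary entropy estimate), whence $|\mathcal A^{(a)}||\mathcal B^{(b)}| \le (1-\delta)^n 2^n \cdot 2^n = (4-\delta')^n$; likewise if $|b - n/2| \ge cn$. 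So we may assume $a,b \in ((1/2-c)n,(1/2+c)n)$. For such $a,b$ the value $|A\cap B|$ ranges over $[\max(0,a+b-n),\min(a,b)]$, an interval whose endpoints lie within $2cn$ of $0$ and $n/2$ respectively; since $\eta n \le l \le (1/2-\eta)n$, choosing $c$ small enough in terms of $\eta$ (say $c = \eta/4$) places $l$ at distance $\ge \eta' n$ from both endpoints, $\eta' = \eta'(\eta) > 0$. Thus it remains to prove: if $\mathcal A \subseteq \binom{[n]}{a}$, $\mathcal B \subseteq \binom{[n]}{b}$ with $|A\cap B|\ne l$ for all $A\in\mathcal A$, $B\in\mathcal B$, and $l$ lies at distance $\ge \eta' n$ from both ends of $[\max(0,a+b-n),\min(a,b)]$, then $|\mathcal A||\mathcal B| \le (1-\delta)^n\binom na\binom nb$ for some $\delta = \delta(\eta') > 0$.

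For this bi-uniform statement, view the forbidden relation as a bipartite graph $H$ between $\binom{[n]}{a}$ and $\binom{[n]}{b}$, joining $A$ to $B$ when $|A\cap B| = l$. Its biadjacency operator commutes with the $S_n$-action, so by Schur's lemma it acts as a scalar $\nu_i$ on the $i$-th $S_n$-isotypic component of the permutation modules ($i = 0,1,\dots$), with $\nu_0$ the valency and the $\nu_i$ for $i\ge 1$ given by Eberlein-type polynomials. Expanding $\mathbf 1_{\mathcal A}$ and $\mathbf 1_{\mathcal B}$ along these components, the cross-independence $\langle \mathbf 1_{\mathcal A}, H\mathbf 1_{\mathcal B}\rangle = 0$ forces a ``Hoffman-type'' inequality bounding $|\mathcal A||\mathcal B|/(\binom na\binom nb)$ in terms of $\theta^2$, where $\theta = \max_{i\ge 1}|\nu_i|/\nu_0$ is the normalized second singular value of $H$. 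The difficulty is that here $\theta$ is only bounded away from $1$ by an absolute constant, not exponentially, so the plain ratio bound is far too weak. To recover the exponential saving I would pass to the Delsarte linear programming bound: choose a nonnegative combination $\Phi$ of the primitive idempotents of the scheme that vanishes on the forbidden relation $|A\cap B| = l$, and show — using quantitative estimates on the Eberlein/Krawtchouk polynomials that are available precisely because $l$ is $\Omega(n)$-deep in the feasible range — that the resulting bound on $|\mathcal A||\mathcal B|$ is $(1-\delta)^n\binom na\binom nb$.

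The hard part — really the entire content of the theorem — is this last point: producing an exponentially (not merely constant-factor) small bound. I expect the cleanest route to carry out by hand is not the polynomial optimisation above but the original Frankl--R\"odl argument: partition $[n]$ uniformly at random into pairs, let the group of coordinate-swaps across these pairs act on $\binom{[n]}{a}$ and $\binom{[n]}{b}$, and argue that for a product-free pair of families of exponential density one can, after restricting to a typical pairing, locate orbits on which the spread of intersection sizes with a fixed partner set is wide enough to be forced to contain $l$ — then iterate, controlling the exponentially small fraction lost at each stage. Arranging the iteration so that the ``atypical'' losses are genuinely exponentially small while the within-orbit spread is provably large is the main obstacle, and it is the point on which I would concentrate all the effort, the reductions of the preceding two paragraphs being routine; this is also why the statement is cited here from \cite{forbid}, with the alternative treatment of \cite{kl}.
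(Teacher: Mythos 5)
This is a statement the paper does not prove: Theorem \ref{franklrodl} is quoted from Frankl and R\"odl \cite{forbid} (Theorem 1.4 there), with \cite{kl} cited for an alternative proof, and is used as a black box in the proof of Theorem \ref{genstickout}. So there is no internal argument to compare yours against; the only question is whether your proposal stands on its own as a proof, and it does not.

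Your first reduction is fine: losing a factor $(n+1)^2$ to pass to single layers, discarding layers far from $n/2$ by the entropy bound, and checking that for $a,b$ within $\eta n/4$ of $n/2$ the forbidden value $l$ sits $\Omega_{\eta}(n)$-deep inside $[\max(0,a+b-n),\min(a,b)]$ --- all routine and correct. But everything after that is a description of strategies rather than a proof. You rightly observe that the Hoffman-type ratio bound coming from the second singular value of the ``intersection exactly $l$'' operator gives only a constant-factor, not exponential, saving; you then propose a Delsarte linear programming bound but supply none of the required estimates on the Eberlein/Krawtchouk eigenvalues, and it is far from clear that the pure LP bound yields an exponential saving in this generality --- which is precisely why the proofs in \cite{forbid} and \cite{kl} are combinatorial/analytic rather than linear-algebraic. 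Finally you fall back on ``the original Frankl--R\"odl argument'' (random pairing of coordinates, spread of intersection sizes within swap-orbits, iteration), but you only name its ingredients; the quantitative heart --- that two families each of density at least $(1-\delta)^{n}$ in their layers must, for $\delta$ small enough in terms of $\eta$, realise every sufficiently deep intersection size --- is exactly the step you defer, and you acknowledge as much. As written, the proposal establishes the easy reductions and leaves the entire content of the theorem unproven; that is a genuine gap, although in the context of this paper the appropriate move is simply to cite \cite{forbid}, as the author does.
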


\noindent \emph{Remark:} In particular, the proof of Theorem \ref{franklrodl} in \cite {forbid} 
gives that for $\eta = 1/10$ we can take $\epsilon = 1/400$ above. Frankl and R\"odl also showed that if $l = \rho n$ with $\rho \in [0,1]$, then $|{\mathcal A}||{\mathcal B}| \leq (4-\rho ^2 + O(\rho ^3))^n$ 
(see Corollary 2.4 in \cite{forbid}). In particular, for $\rho \in [0,1/10]$, $|{\mathcal A}||{\mathcal B}|\leq e^{-\rho ^{2}n/16}4^n = e^{-l ^{2}/16n}4^n$. Combining these two ranges shows that if $l \in [0 ,n/3]$ and ${\mathcal A}, {\mathcal B}\subset {\mathcal P}[n]$ with $|A\cap B| \neq l$ for all $A\in {\mathcal A}$, $B\in {\mathcal B}$ then 
\begin{equation}
   \label{pocketsized Frankl Rodl bound}
	|{\mathcal A}||{\mathcal B}| \leq \max \{(4 - 1/400)^{n}, e^{-l^2/16n}4^n\} = 
	\max \{(1 - 1/1600)^{n}, e^{-l^2/16n} \}4^n.
\end{equation}

We are now ready to complete the proof of Theorem \ref{genstickout}.
%
%
%
%
%
%

\begin{proof}[Proof of Theorem \ref{genstickout}.] 
 We will prove the theorem with $C = 2^{200}$. We may assume that $n\geq 2^{300}$ since 
 otherwise $\frac{C}{n^{2/3}} 2^n > 2^n$ and the conclusion is trivial.
 Let $G$ and $\mathbf{x}$ be as in the statement of the theorem and suppose for 
 contradiction that ${\mathcal A}$ is a \mbox {$(G,\mathbf{x})$-Sperner} family with 
 $|{\mathcal A}|> w(G) + \frac{C}{n^{2/3}}2^n$. 
 
 To begin, apply Lemma \ref{configuration} to ${\mathcal A}$ 
 to find sets $S$ and $T$, a chain ${\mathcal C}$ and a set $I \subset [0,n]$ as 
 in the Lemma. Now by Lemma \ref{configuration} \emph{(ii)} 
 $\sum _{i\in I} \binom {n}{i} \geq |{\mathcal A}| -  \frac{2^{10}}{n^{2/3}}2^n > w(G)$. By definition of $w(G)$, $I$ cannot be an independent set of $G$. 
 Therefore there exist $ij\in E(G)$ with $i, j\in I$. Now note that Lemma \ref{configuration} \emph {(i)} guarantees that
 $C_i$ and $C_j$ are in ${\mathcal A}$. We will show that regardless of the value of $x_{ij}$ we can 
 find sets in ${\mathcal A}^{(i)}$ and ${\mathcal A}^{(j)}$ which violate the $(G,{\mathbf{x}})$-Sperner 
 condition.

\noindent \textbf{Case I:} $x_{ij} \in [0, n^{1/3}]$.

 Starting with $D_0$ as in Lemma \ref{configuration} \emph {(iv)}, we will construct a sequence of sets 
 $D_0, D_1,\ldots ,D_{x_{ij}} \in {\mathcal D}_i$ such that each consecutive pair $D_l$ and $D_{l+1}$ 
 are neighbours and $|D_{l+1}\backslash {D_0}| = |D_l\backslash {D_0}| + 1$ for all $l\in[0,x_{ij}-1]$. 
 This will then give that $(C_i \setminus D_0) \cup D_{x_{ij}}\in {\mathcal A}^{(i)}$ and 
 $C_j \in {\mathcal A}^{(j)}$. But since $C_i \subset C_j$ and $C_i \cap T = C_j \cap T = D_0$ 
 \begin{equation*}
  |((C_i \setminus D_0) \cup D_{x_{ij}})\setminus C_j| = |D_{x_{ij}} \setminus D_0 | = x_{ij}
 \end{equation*}
 which contradicts the $(G,\mathbf{x})$-Sperner condition.

 Suppose we have so far found sets $D_0,\ldots ,D_k$ with $k< x_{ij}$ and now wish to pick $D_{k+1}$. 
 A neighbour $E$ of $D_k$ belonging to ${\mathcal D}_i$ can be taken as $D_{k+1}$ so long as 
 $E\setminus D_k \not \subset D_0$ and $D_k\setminus E \subset D_0$. 
 But there are at most $|D_0 \setminus D_k |t/2$ neighbours of $D_k$ in ${\mathcal D}_i$ which fail 
 to satisfy the first condition and at most $|D_k\cap (T \setminus D_0)|t/2$ 
 which fail to satisfy the second. Now 
\begin{equation*}
 \label{badneighbourcount}
  \begin{split}
\frac{|D_0 \setminus D_k |t}{2} + \frac{|D_k\cap (T \setminus D_0)|t}{2} 
 = \frac{kt}{2} + \frac{kt}{2}
 < x_{ij}n 
 \leq n^{4/3}.
\end{split}
\end{equation*}
Now as $D_k \in {\mathcal D}_i$, by Lemma \ref{configuration} 
\emph{(v)} $D_k$ has at least $n^{4/3}$ neighbours in ${\mathcal D}_i$ and therefore there is a suitable choice for $D_{k+1}$, as required.

\noindent \textbf{Case II:} $x_{ij} \in [n^{1/3}, n^{3/5}]$.

Since $i,j \in I$, by Lemma \ref{configuration} \emph{(iii)} we have $C_i \cap S = C_j \cap S = F_0$. By the $(G,\mathbf{x})$-Sperner condition, we must have $|A \setminus B| \neq x_{ij}$ for all $A \in {\mathcal A}^{(i)}$ and $B \in {\mathcal A}^{(j)}$. However, also by Lemma \ref{configuration} \emph{(iii)}, $(C_i \setminus F_0) \cup F \in {\mathcal A}^{(i)}$ and $(C_j\setminus F_0) \cup F' \in {\mathcal A}^{(j)}$ for all $F \in {\mathcal F}_i$ and $F' \in 
{\mathcal F}_j$. This gives that 
\begin{equation}
	\label{intersection bound for F_i and F^*_j}
	|F \cap (S\setminus F')| = |F \setminus F'| = |((C_i \setminus F_0) \cup F )\setminus ((C_j \setminus F_0) \cup F')| \neq x_{ij}
\end{equation}
and also by Lemma \ref{configuration} \emph{(iii)}, ${\mathcal F}_i$ and ${\mathcal F}_j$ satisfy
\begin{equation}
 \label{boundsonsizeofFiandFj}
  |{\mathcal F}_i|, |{\mathcal F}_j| \geq \frac{1}{n}\binom {s}{s/2} 
  \geq \frac{1}{n^2}2^s.
\end{equation}
We now show that the Frankl-R\"odl theorem contradicts \eqref{boundsonsizeofFiandFj}.

Let ${\mathcal F}^* _j $ denote the set $\{S\setminus F: F\in {\mathcal F}_j\} \subset S^{(s/2)}$. Now ${\mathcal F}_i, {\mathcal F}^*_j \subset {\mathcal P}(S)$ and by \eqref{intersection bound for F_i and F^*_j} we have $|F \cap F'| \neq x_{ij}$ for all $F \in {\mathcal F}_i$ and $F' \in {\mathcal F}^*_j$. As  $x_{ij} \in [0,n^{3/5}] = [0,|S|/4]$, by \eqref{pocketsized Frankl Rodl bound}
\begin{align*}
 \begin{split}
 |{\mathcal F}_i||{\mathcal F}_j| = |{\mathcal F}_i||{\mathcal F}^*_j| 
 & \leq \max \{ (1-{1/1600})^{s}, e^{-{x_{ij}}^2/{16s}} \} 4^{s}\\
 & \leq \max \{ (1-{1/1600})^{s}, e^{-{n^{2/3}}/{64n^{3/5}}}\} 4^{s}\\
 & = \max \{ (1-{1/1600})^{4n^{3/5}}, e^{-{n^{1/15}}/{64}}\} 4^{s}\\
 & < \frac{1}{n^{4}}4^s.
 \end{split}
\end{align*} 
The second inequality here holds since $x_{ij} \geq n^{1/3}$ and the final inequality holds for $n\geq 2^{300}$. However, this contradicts \eqref{boundsonsizeofFiandFj}.

\noindent \textbf{Case IIIa:} $x_{ij} \in [n^{3/5}, n/4]$.

This is similar to the previous case. Since $i,j \in I$, by Lemma \ref{configuration} \emph{(iv)} we have $C_i \cap T = C_j \cap T = D_0$. By the $(G,\mathbf{x})$-Sperner condition, we must have $|A \setminus B| \neq x_{ij}$ for all $A \in {\mathcal A}^{(i)}$ and $B \in {\mathcal A}^{(j)}$. However, also by Lemma \ref{configuration} \emph{(iv)}, $(C_i \setminus D_0) \cup D \in {\mathcal A}^{(i)}$ and $(C_j\setminus D_0) \cup D' \in {\mathcal A}^{(j)}$ for all $D \in {\mathcal D}_i$ and $D' \in 
{\mathcal D}_j$. This gives that 
\begin{equation}
	\label{intersection bound for D_i and D^*_j}
	|D \cap (T\setminus D')| = |D \setminus D'| = |((C_i \setminus D_0) \cup D )\setminus ((C_j \setminus D_0) \cup D')| \neq x_{ij}
\end{equation}
and also by Lemma \ref{configuration} (iv), ${\mathcal D}_i$ and ${\mathcal D}_j$ satisfy
\begin{equation}
 \label{boundsonsizeofDiandDj}
  |{\mathcal D}_i|, |{\mathcal D}_j| \geq \frac{1}{n}\binom {t}{t/2} 
  > \frac{1}{n^2}2^t.
\end{equation}

Let ${\mathcal D}^* _j $ denote the set $\{T\setminus D: D\in {\mathcal D}_j\} \subset T^{(t/2)}$. Now ${\mathcal D}_i, {\mathcal D}^*_j \subset {\mathcal P}(T)$ and by \eqref{intersection bound for D_i and D^*_j} we have $|D \cap D'| \neq x_{ij}$ for all $D \in {\mathcal D}_i$ and $D' \in {\mathcal D}^*_j$. As  $x_{ij} \in [0,n/4] \subset [0,|T|/3]$, by \eqref{pocketsized Frankl Rodl bound}
\begin{align*}
 \begin{split}
 |{\mathcal D}_i||{\mathcal D}_j| = |{\mathcal D}_i||{\mathcal D}^*_j| 
 & \leq \max \{ (1-{1/1600})^{t}, e^{-{x_{ij}}^2/{16t}} \} 4^{t}\\
 & \leq \max \{ (1-{1/1600})^{t}, e^{-{n^{6/5}}/{64n}}\} 4^{t}\\
 & = \max \{ (1-{1/1600})^{t}, e^{-{n^{1/5}}/{64}}\} 4^{t}\\
 & < \frac{1}{n^{4}}4^t.
 \end{split}
\end{align*} 
The second inequality here holds since $x_{ij} \geq n^{3/5}$ and the final inequality holds for $n\geq 2^{300}$. However, this contradicts \eqref{boundsonsizeofDiandDj}.

\noindent \textbf{Case IIIb:} $x_{ij} \in [n/4, n/2-9(n\log n)^{1/2}]$.

This case can be argued in the same way as Case IIIa by noting that in \eqref{intersection bound for D_i and D^*_j}, we have $|D\setminus D'| \neq x_{ij}$ for all $D\in {\mathcal D}_i$ and $D'\in {\mathcal D}_j$ if and only if $|D \cap D'| \neq t/2 - x_{ij}$. We also have $t/2 - x_{ij} \in [8(n\log n)^{1/2}, n/4] \subset [8(n\log n)^{1/2}, |T|/3]$. By \eqref{pocketsized Frankl Rodl bound} we therefore have
\begin{align*}
\begin{split}
 |{\mathcal D}_i||{\mathcal D}_j| 
 & \leq \max \{ (1-{1/1600})^{t}, e^{-{x_{ij}}^2/{16t}} \} 4^{t}\\
 & \leq \max \{ (1-{1/1600})^{t}, e^{-{64 n\log n}/{16n}} \}4^{t}\\
 & = \max \{ (1-1/1600)^{n-2(n\log n)^{1/2}}, n^{-4} \} 4^{t}\\
 & \leq \frac{1}{n^4} 4^t.
 \end{split}
\end{align*} 
But this again contradicts \eqref{boundsonsizeofDiandDj}.

As the Cases \textbf{I - IIIb} above cover the range of possibilities for the values of $x_{ij}$, this completes the proof to the Theorem.
\end{proof}

\noindent \emph{Remark:} While we have not pursued this here, we note that with a more involved version of 
Lemma \ref{configuration} we can replace the term $\frac{C}{n^{2/3}}2^n$ term appearing in Theorem 
\ref{genstickout} with a term of the form $\frac{C\log n}{n}2^n$. 
\vspace{2mm}

%
%
%
%

We now show that some restriction on the values of $x_{ij}$ as in the statement of 
Theorem \ref{genstickout} is necessary. Indeed, take $G = K_{n+1}$ and let 
$x_{ij}\in [n/2 - \beta n^{1/2}, \min (i,n-j)]$ for all $i<j$. This 
gives $w(G)=(1+o(1))\binom{n}{n/2} = O(\frac{2^n}{n^{1/2}})$. 

Now take ${\mathcal A}\subset {\mathcal P}[n]$ to be the family
\begin{equation*}
 {\mathcal A} = \{ A\subset [n]: |A|\leq n/2 \mbox{ and } |A\cap [n/2]|> n/4 +\beta n^{1/2}/2\}.
\end{equation*}
Clearly we have $|A\cap B|> \beta n^{1/2}$ for all $A\in {\mathcal A}^{(i)}$, $B\in {\mathcal A}^{(j)}$. 
Therefore $|A\setminus B|\leq i - \beta n^{1/2} \leq n/2 - \beta n^{1/2}$. This shows that $\mathcal A$ is a $(G,\mathbf{x})$-Sperner 
family. But it can be shown that for $\beta>1$, $|{\mathcal A}| \geq C^{-\beta ^2} 2^n$ for some fixed $C>1$. 
Now taking $\beta < c (\log n)^{1/2}$ for a small enough $c>0$ gives a 
$(G,\mathbf{x})$-Sperner family of size significantly bigger than $w(G)$.

%
%
%
%
%
%

\section{Forbidding patterns between layers}

As mentioned in the Introduction, our first proof of Theorem \ref{ordertilted} is based on Furstenberg 
and Katznelson's density Hales-Jewett theorem \cite {FK} (see also \cite{DHJ}). A set $L\subset [k]^{n}$ 
is said to be a \emph {combinatorial line} if there exists a partition of $[n] = X_1\cup \cdots \cup X_k \cup A$ 
with $A\neq \emptyset$ such that 
\begin{equation*}
 L=\{(x_1,\ldots ,x_n): x_i = l \mbox{ if } i\in X_l \mbox{ and } x_j=x_k 
\mbox { for all } j,k\in A\}.
\end{equation*}
 The set $A$ is called the \emph {active coordinate set}.

\begin{thm}[Density Hales-Jewett]
 {
  For any $\alpha >0$ and $k\in \mathbb{N}$ there exists $n_0(\alpha ,k)\in \mathbb{N}$ such that if 
  $n\geq n_0(\alpha ,k)$ every set $A\subset [k]^n$ with $|A|\geq \alpha k^n$ contains a combinatorial 
  line.
}
\end{thm}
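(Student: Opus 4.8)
The plan is to argue by induction on the number of colours $k$ via a density-increment scheme; this is the combinatorial approach of the Polymath project \cite{DHJ}, and since the result is genuinely deep, what follows is only a skeleton.

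\emph{Base cases.} For $k=1$ the space $[1]^n$ is a single point, which is itself a (degenerate) combinatorial line, so the statement is trivial. For $k=2$ a combinatorial line in $[2]^n$ with active set $A\ne\emptyset$ consists of the two points that agree off $A$ and are constantly $1$, respectively constantly $2$, on $A$; under the identification of $[2]^n$ with ${\mathcal P}[n]$ (coordinate $2$ meaning ``in the set'') such a pair is exactly a chain $P\subsetneq Q$. Hence DHJ for $k=2$ says that a family of constant density contains a $2$-chain, which is immediate from Sperner's theorem, since an antichain has density $O(n^{-1/2})=o(1)$. This is precisely the way Sperner's theorem enters the induction, as mentioned in the Introduction.

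\emph{Inductive step.} Assuming DHJ($k-1$), let $A\subset[k]^n$ with $|A|\ge\alpha k^n$. The core is a dichotomy: either $A$ is ``quasirandom'' with respect to an appropriate family of structured sets, in which case a counting lemma -- double counting the combinatorial lines of $[k]^n$ against a measure supported on their skeletons -- forces a line inside $A$; or $A$ has strictly larger density on some structured piece of $[k]^n$. The structured pieces come in two flavours. First, \emph{combinatorial subspaces}: fixing some coordinates and partitioning the rest into $m$ wildcard blocks gives an isomorphic copy of $[k]^m$, and DHJ is preserved under restricting to such a copy, so a density gain there is real progress. Second, and more delicately, \emph{$i$-insensitive sets} (sets unchanged by switching $i$'s to $j$'s on an index set) and their Boolean combinations: correlation of $A$ with one of these reduces the problem to $k-1$ effective colours, so that DHJ($k-1$) supplies many lines of the ambient space which are then intersected with $A$. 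Each time the density strictly increases it does so by at least a function of $\alpha$ and $k$, so the iteration terminates after boundedly many rounds, and for $n$ large in terms of $\alpha,k$ we are forced into the quasirandom case and obtain a line.

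\emph{Main obstacle.} The hard part -- and the reason the theorem is famous -- is setting up this dichotomy honestly: isolating the correct notion of quasirandomness (a uniformity norm or correlation condition tuned to combinatorial lines), proving the matching counting lemma, and, above all, proving the structural direction, namely that failure of quasirandomness yields a density increment on a \emph{controlled} structured set, while keeping the block sizes and the reduction to DHJ($k-1$) mutually compatible as the iteration runs. A completely different route, which sidesteps these combinatorial difficulties at the cost of giving no explicit bound on $n_0(\alpha,k)$, is the original ergodic-theoretic argument of Furstenberg and Katznelson \cite{FK}: via a correspondence principle one passes to a multiple-recurrence statement for a measure-preserving action of the semigroup of finite subsets of $\mathbb{N}$, and proves it by an IP-version of the Furstenberg structure theorem (a tower of compact and weakly mixing extensions together with an IP-Szemer\'edi argument at each level); there the difficulty migrates to building the IP-limit machinery and establishing multiple recurrence along IP-sets.
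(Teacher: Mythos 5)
This statement is not proved in the paper at all: it is the Furstenberg--Katznelson density Hales--Jewett theorem, quoted from \cite{FK} and \cite{DHJ} and used as a black box in the first proof of Theorem \ref{ordertilted}. So there is no in-paper argument to compare yours against; the only honest benchmark is the literature you yourself cite.

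Judged as a proof, your submission has a gap, and the gap is essentially the whole theorem. The parts you actually carry out are fine: $k=1$ is trivial, and the $k=2$ case correctly reduces to Sperner's theorem via the identification of a line with active set $A$ with the chain $P\subsetneq P\cup A$, so a line-free subset of $[2]^n$ is an antichain of density $O(n^{-1/2})$. But from the inductive step onward everything load-bearing is named rather than established: you do not define the uniformity/correlation notion of quasirandomness, do not prove the counting lemma that extracts a line from a quasirandom set, do not prove that failure of quasirandomness yields a density increment on a combinatorial subspace of controlled dimension (the passage from correlation with $ij$-insensitive sets to a genuine subspace increment is the single hardest step of the Polymath argument), and do not verify that the increment is bounded below by a function of $\alpha$ and $k$ so that the iteration terminates. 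Your closing paragraph candidly concedes all of this. The same applies to the ergodic alternative: the correspondence principle and the IP multiple-recurrence theorem are stated as destinations, not reached. Since the shortest complete proofs of DHJ run to dozens of pages, a skeleton at this level of detail cannot be accepted as a proof; for the purposes of this paper the correct move is the one the author makes, namely to cite \cite{FK} and \cite{DHJ} and use the theorem as given.
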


\begin{proof}[First proof of Theorem \ref{ordertilted}.] It is enough to prove the theorem when $n$ is a multiple of $3$ since the general case 
follows easily from it. Let $n=3m$. We will identify ${\mathcal P}[n] = \{0,1\}^n$ with 
the set $\{0,\ldots ,7\}^m$ via the map $f: \{0,1\}^n \rightarrow \{0,\ldots ,7\}^m$, 
which sends $x=(x_1,\ldots ,x_n)\in \{0,1\}^n$ to $f(x)=(y_1,\ldots ,y_m)$ where 
$y_i=x_i+2x_{i+m}+4x_{i+2m}$ for all $i\in [m]$. 

Suppose $|{\mathcal A}|=\alpha 2^n$ for some constant $\alpha >0$. Then $|f({\mathcal A})|=\alpha 8^m$ 
where $f({\mathcal A})=\{f(a):a\in {\mathcal A}\}$. By the density Hales-Jewett theorem, if $n$ is 
sufficiently large, $f(\mathcal A)$ contains a combinatorial line $L$ with 
$[m]=X_0\cup \ldots \cup X_7\cup A$, where $A$ is the active coordinate set. Write 
$L=\{L_0,\ldots ,L_7\}$ where $L_i$ corresponds to the element of $L$ in which elements 
of the active coordinate set takes the value $i$.

Let $K$ be the subset of $\mathcal A$ which corresponds under $f$ to $L$, i.e $f(K)=L$. We claim 
that $K$ contains a forbidden pair $A$, $B$. Indeed, taking $A\in K$ such that $f(A)=L_1$ 
and $f(B)=L_6$, all elements of $A\backslash B$ occur in $[m]$ while all elements of 
$B\backslash A$ occur in $[m+1,3m]$. Furthermore, for each element $i\in [m]$ in 
$i\in A\backslash B$ if and only if $i+m, i+2m\in B\backslash A$. Therefore 
$|B\backslash A|=2|A\backslash B|$, a contradiction. \end{proof}

%
%
%
%
%
%
%
%
%
%
%

Our second proof of Theorem \ref{ordertilted} is again given by a Katona type averaging argument (see \cite{katona}). However this time it is more involved, owing to the fact that sets in the same level may forbid a different number of elements in ${\mathcal P}[n]$. For example, if the set $[1,n/3] \in \A ^{(n/3)}$, it forbids many elements of $[n]^{(n/2)}$ from being in $\A ^{(n/2)}$ -- all sets of the form $B \cup C$ where $B \in [1,n/3]^{(n/6)}$ and $C \in [n/3 +1 ,n]^{(n/3)}$. However, if the set $[2n/3 + 1, n] \in {\mathcal A}^{(n/3)}$ it does not prevent any sets from $[n]^{(n/2)}$ lying in ${\mathcal A}^{(n/2)}$. To compensate for this imbalance, we first break the set system ${\mathcal A}$ into smaller pieces all of which behave similarly, in the sense that if two elements of ${\mathcal A}$ lie inside the same piece then they forbid roughly the same number of elements in any other piece. We then carry out a Katona type averaging procedure over these pieces.
\vspace {1.5mm}

%
%
%
%
%
%

%
%
%
%
%
%
%
%
%
%
%
%
%
%

\begin{proof}[Second proof of Theorem \ref{ordertilted}.]  We will again assume that $n$ is a multiple of $3$. For convenience, we let $\alpha (n) = e^{120(\log n)^{1/2}}$. We will prove that any set ${\mathcal A} \subset {\mathcal P}[n]$ as in the statement of the theorem satisfies 
\begin{equation*}
	|{\mathcal A}| \leq \frac {100 \alpha (n) }{n^{1/2}}2^n.
\end{equation*}
We can assume that $n\geq 10^4$ as otherwise $100 \alpha (n) \geq n^{1/2}$ and the result is immediate. 

Given a set $D\subset [n]$, let $r_D = |D\cap [n/3]| - n/6 $ and $s_D = |D\cap [n/3+1, n]| - n/3$. Take ${\mathcal B}$ to be the 
subset of ${\mathcal A}$ with
\begin{equation*}
 {\mathcal B} = \{ A\in {\mathcal A}:  |r_A|\leq (n\log n)^{1/2} \mbox { and } |s_A|\leq (n\log n)^{1/2} \}
\end{equation*}
From Chernoff's inequality we have $|{\mathcal A}\setminus {\mathcal B}| \leq (2 n^{-6} + 2 n^{-3})2^n 
\leq \frac{4 \alpha (n) }{n^{1/2}}2^n$
so it suffices to show that 
\begin{equation}
\label{mainsetbound}
 |{\mathcal B}| 
\leq \frac{96 \alpha (n) }{n^{1/2}}2^n.
\end{equation}
Let $L = n^{1/2}$. For all $i,j \in [-(\log n)^{1/2}/2, (\log n)^{1/2}/2]$ we let 
\begin{equation*}
[n]_{i,j} := \{D\subset [n]: |r_D - 2iL|\leq L \mbox{ and } |s_D -2jL| \leq L\}. 
\end{equation*} 
Also let ${\mathcal B}_{i,j} = {\mathcal B}\cap [n]_{i,j}$. To prove \eqref{mainsetbound} it clearly suffices to show 
that
\begin{equation}
 \label{zonebound}
 | {\mathcal B}_{i,j} | 
\leq  \frac {96 \alpha (n)}{n^{1/2}}\big |[n]_{i,j}\big |.
\end{equation}

%
%
%
%
%
%

We will fix $i,j \in [-(\log n)^{1/2}/2, (\log n)^{1/2}/2]$ for the remainder of the theorem and show \eqref{zonebound}. Let $K = n^{1/2}/12$ and pick the following:
\begin{itemize}
 \item a set $U\subset [n/3]$ of size $K$ chosen uniformly at random,

 \item a random set $S_1\subset [n/3]\setminus U$ where each $s\in [n/3]\setminus U$ is included in $S_1$ 
       independently with probability $p_{1,i} = 1/2 + 6i/n^{1/2}$, 

 \item a set $V\subset [n/3+1,n]$ of size $2K$ chosen uniformly at random,

 \item a random set $S_2\subset [n/3]\setminus V$ where each $s\in [n/3+1,n]\setminus V$ is included in $S_2$ 
       independently with probability $p_{2,j} = 1/2 + 3j/n^{1/2}$.
\end{itemize}
Finally place a random ordering $(u_1,\ldots ,u_{K})$ on the elements of $U$ and a random ordering $(v_1,\ldots ,v_{2K})$ on the elements of $V$. For all $k\in [0,K]$, let $U_k = \{u_1,\ldots ,u_{k}\}$ and $V_k = \{v_{2K-2k+1},\ldots , v_{2K}\}$.

Having made these choices, for all $k\in [0,K]$ take 
\begin{equation*}
C_k = U_k \cup S_1 \cup V_k \cup S_2
\end{equation*}
and let ${\mathcal C}= \{C_k: k\in[0, K]\}$. Note that any two elements of ${\mathcal C}$ form a forbidden pair. Indeed, for $k< l$ with $k,l \in [0,K]$ we have $C_k\setminus C_l = \{u_{k+1},\ldots , u_l\}$ and $C_l\setminus C_k = \{v_{2K - 2l + 1},\ldots , v_{2K - 2k}\}$ and $u<v$ for all elements $u\in U$, $v\in V$. Therefore, letting $X_k$ be the indicator random variable which equals $1$ if $C_k\in \mathcal B_{i,j}$ and $0$ otherwise, for all choices of ${\mathcal C}$ we have
\begin{equation}
\label{countingidentity}
 \sum _{k=0}^K X_k = |{\mathcal B_{i,j}}\cap {\mathcal C}|\leq 1.
\end{equation}
Therefore taking the expectation of both sides of \eqref{countingidentity} and expanding we have
\begin{equation}
 \sum _{k=0}^{K} \sum _{B\in {\mathcal B_{i,j}}} {\mathbb{P}}(C_k=B)
= \sum _{k=0}^{K}{\mathbb{P}}(C_k\in {\mathcal B_{i,j}})
= \sum _{k=0}^{K}{\mathbb{E}}(X_k)\leq 1.
\label{summationprobability}
\end{equation}
In Lemma \ref{lemma for ordered tilted} which follows we will show that, for all $B\in [n]_{i,j}$ we have
\begin{equation}
 \label{localprobability}
 {\mathbb P}(C_k=B)
 \geq \frac {1}{8\alpha (n)  \big |[n]_{i,j}\big |}.
\end{equation}
We claim that this proves \eqref{zonebound}. Indeed, by \eqref{summationprobability} and \eqref{localprobability} we have
\begin{equation*} 
\frac {|{\mathcal B}_{i,j}|K}{8\alpha (n) \big |[n]_{i,j}\big |}
=
\sum _{k=0}^{K} \sum _{B\in {\mathcal B_{i,j}}} \frac {1}{8\alpha (n) \big |[n]_{i,j}\big |} 
\leq
\sum _{k=0}^{K} \sum _{B\in {\mathcal B_{i,j}}} {\mathbb{P}}(C_k=B)
\leq 1.
\end{equation*}
Using that $K = n^{1/2}/12$ and rearranging, this gives \eqref{zonebound}. \end{proof}

%
%
%
%
%

\begin{lem}
	\label{lemma for ordered tilted}
	Given $B \in [n]_{i,j}$ and $k\in [0,K]$ we have 
	\begin{equation*}
		{\mathbb P}(C_k=B) \geq \frac {1}{8\alpha (n)  \big |[n]_{i,j}\big |}.
	\end{equation*}
\end{lem}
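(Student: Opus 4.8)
The plan is to compute the probability $\mathbb{P}(C_k = B)$ directly by decomposing the random construction into its independent ingredients and evaluating the chance that each ingredient matches the corresponding part of $B$. Recall $C_k = U_k \cup S_1 \cup V_k \cup S_2$, and the four random objects $U$ (with its ordering), $S_1$, $V$ (with its ordering), $S_2$ are chosen independently, with $U_k, V_k$ determined by the orderings. For $C_k = B$ to occur we need exactly: $U_k = B \cap U \cap [n/3] \cap \text{(first $k$ elements)}$, more precisely, writing $B_1 = B \cap [n/3]$ and $B_2 = B \cap [n/3+1, n]$, we need $U_k = B_1 \cap (U_k \cup S_1)$ with the complementary part of $B_1$ coming from $S_1$, and $U \setminus U_k$ disjoint from $B_1$; symmetrically for the bottom coordinates. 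So I would split:
\[
\mathbb{P}(C_k = B) = \mathbb{P}_{\text{top}} \cdot \mathbb{P}_{\text{bottom}},
\]
where $\mathbb{P}_{\text{top}}$ is the probability that the $[n/3]$-coordinates of $C_k$ equal $B_1$ and $\mathbb{P}_{\text{bottom}}$ the analogous statement for $[n/3+1,n]$.

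**Next I would estimate $\mathbb{P}_{\text{top}}$.** The event is: choose $U \in [n/3]^{(K)}$ and an ordering so that the first $k$ elements $U_k$ lie in $B_1$ and the remaining $K-k$ elements lie outside $B_1$; and the Bernoulli$(p_{1,i})$ set $S_1$ on $[n/3]\setminus U$ picks out exactly $B_1 \setminus U_k$. The combinatorial factor for $U$ is a ratio of binomial coefficients — the number of ways to pick $k$ elements from $B_1$ (of size $\approx n/6 + 2iL$) and $K-k$ from its complement, divided by $\binom{n/3}{K}$ times the ordering normalization. The Bernoulli factor is $p_{1,i}^{|B_1 \setminus U_k|}(1-p_{1,i})^{|[n/3]\setminus U \setminus (B_1\setminus U_k)|}$. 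The key simplification is that $p_{1,i}$ was chosen precisely so that $p_{1,i}$ matches the density $|B_1|/(n/3)$ up to the $2iL$ shift — so the product of Bernoulli weights is close to $2^{-(n/3 - K)}$ up to a controlled multiplicative error of the form $e^{O((\log n)^{1/2})}$, coming from the $|r_B - 2iL| \le L$ slack (this is exactly why $B \in [n]_{i,j}$ is assumed). Combining, $\mathbb{P}_{\text{top}}$ should come out as roughly $c \cdot 2^{-n/3} \cdot \sqrt{n} / \alpha(n)^{1/2}$ — the $\sqrt n$ from a central binomial estimate and the $\alpha(n)^{1/2}$ absorbing the error — and symmetrically $\mathbb{P}_{\text{bottom}} \gtrsim 2^{-2n/3}\sqrt{n}/\alpha(n)^{1/2}$. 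Multiplying gives something of order $2^{-n} n /\alpha(n)$, and since $|[n]_{i,j}| \le 2^n/\sqrt n$ (it is a thin slab of $\mathcal{P}[n]$), this yields the claimed bound $\mathbb{P}(C_k=B) \ge 1/(8\alpha(n)|[n]_{i,j}|)$ after checking constants.

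**I expect the main obstacle to be bookkeeping the error terms precisely enough** to land the explicit constants $8$ and $120$ in $\alpha(n) = e^{120(\log n)^{1/2}}$. There are several sources of multiplicative slack that must each be bounded by a clean power of $e^{(\log n)^{1/2}}$: (a) the Bernoulli weight deviation, governed by how far $p_{1,i}$ and $p_{2,j}$ are from the true densities — here $|r_B - 2iL|\le L = n^{1/2}$ contributes a factor like $\exp(O(L^2/n)) = \exp(O(1))$ per coordinate block, but one must be careful it is not $\exp(O(L)) $; (b) the binomial ratios $\binom{|B_1|}{k}\binom{n/3-|B_1|}{K-k}/\binom{n/3}{K}$, which by a Stirling/entropy estimate are $\Theta(2^{-n/3}\sqrt n)$ uniformly in $k \in [0,K]$ provided $K = n^{1/2}/12 = o(n/ \log n)$, with the uniformity in $k$ needing the bound $K \ll \sqrt{n/3 \cdot |B_1|}$-type inequality; (c) replacing $|[n]_{i,j}|$, which I would lower-bound by a product of two one-dimensional binomial-tail slab sizes each of order $2^{n/3}/\sqrt n$ and $2^{2n/3}/\sqrt n$ respectively. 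A secondary subtlety is that $S_1$ is defined on $[n/3]\setminus U$ and $S_2$ on $[n/3+1,n]\setminus V$, so the number of "free" Bernoulli coordinates depends on $U$ and $V$ but only through $|U| = K$, $|V| = 2K$, which are deterministic — so this causes no real trouble, just care in writing the exponents. I would organize the proof as: (1) factor as top times bottom; (2) for the top, condition on $U$ and its ordering, handle the ordering/$U_k$ count, then the $S_1$ Bernoulli weight; (3) plug in the explicit $p_{1,i}$ and use $B\in[n]_{i,j}$ to bound the weight ratio; (4) estimate the binomial coefficients; (5) mirror for the bottom; (6) combine and compare with the explicit lower bound on $|[n]_{i,j}|$.
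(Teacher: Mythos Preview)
Your direct-computation approach differs from the paper's, and as written it has a real gap. The paper does \emph{not} estimate $\mathbb{P}(C_k=B)$ and $|[n]_{i,j}|$ separately. Instead it proves two facts: (a) $\mathbb{P}(C_k\in [n]_{i,j})\ge 1/8$, via a Chernoff bound on $|S_1|,|S_2|$ using that $p_{1,i},p_{2,j}$ are tuned so the mean of $|C_k\cap[n/3]|,|C_k\cap[n/3+1,n]|$ lands in the correct window; and (b) for any $A,B\in[n]_{i,j}$, $\mathbb{P}(C_k=A)/\mathbb{P}(C_k=B)\le \alpha(n)$. From these two, $1/8\le \sum_{A\in[n]_{i,j}}\mathbb{P}(C_k=A)\le \alpha(n)\,|[n]_{i,j}|\,\mathbb{P}(C_k=B)$ gives the lemma immediately, with no need to know $|[n]_{i,j}|$ at all.

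Your plan breaks precisely because the $i,j$-dependence is polynomial, not subpolynomial. You assert that the Bernoulli weight $p_{1,i}^{\,a}(1-p_{1,i})^{\,b}$ equals $2^{-(n/3-K)}$ up to a factor $e^{O((\log n)^{1/2})}$. Write $\delta=6i/n^{1/2}$; then $\log\bigl(p_{1,i}^{\,a}(1-p_{1,i})^{\,b}\,2^{a+b}\bigr)\approx 2\delta(a-b)-2\delta^2(a+b)$, and with $a-b\approx 2r_B\approx 4i n^{1/2}$, $a+b\approx n/3$, this is $\approx 48i^2-24i^2=24i^2$. For $|i|$ near $(\log n)^{1/2}/2$ this is $\Theta(\log n)$, i.e.\ a genuine power of $n$, not $e^{O((\log n)^{1/2})}$. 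Symmetrically, $|[n]_{i,j}|$ itself shrinks by a polynomial factor in $n$ for extreme $i,j$ (the window of $r_D$ sits $\Theta((n\log n)^{1/2})$ standard deviations from the centre). These two polynomial factors do cancel, but your proposal treats both as $e^{O((\log n)^{1/2})}$ uniformly in $i,j$, which is false; and you also need a \emph{lower} bound on $|[n]_{i,j}|$ (you wrote upper in the main text), one that carries this same polynomial dependence. The paper's ratio argument sidesteps all of this: step (b) only compares $A,B$ inside the \emph{same} zone, where $|r_A-r_B|,|s_A-s_B|\le 2L$, and the polynomial-in-$i,j$ normalisation cancels in the ratio; step (a) replaces any estimate of $|[n]_{i,j}|$ by the statement that the random set lands there with constant probability.
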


\begin{proof}
We will show the claimed bound in two steps. In the first we show that ${\mathbb P}(C_k \in [n]_{i,j}) \geq 1/8$. In the second we show that for all $A,B \in [n]_{i,j}$ we have 
\begin{equation} 
	\label{comparingelementsinsamezone}
	\frac {{\mathbb P}(C_k = A)}{{\mathbb P}(C_k = B)} \leq \alpha (n).
\end{equation}
Combined these then gives the bound claimed in the statement of the lemma since 
\begin{align*}
	\frac{1}{8} & \leq {\mathcal P}(C_k \in [n]_{i,j}) = 
	\sum _{A \in [n]_{i,j}} {\mathcal P}(C_k = A) \\
	& \leq \sum _{A \in [n]_{i,j}} \alpha (n) {\mathcal P}(C_k = B)
	= \alpha (n) |[n]_{i,j}| {\mathcal P}(C_k = B).
\end{align*}

We first show the bound on ${\mathbb P}(C_k \in [n]_{i,j})$. Now $C_k = (U_k \cup S_1) \cup (V_k \cup S_2) \in [n]_{i,j}$ if 
\begin{align*}
\begin{split}
 & |U_k \cup S_1| = k + |S_1| \in [n/6 + (2i-1)L, n/6 + (2i + 1)L-1] \mbox{ and }\\
 & |V_k \cup S_2| = 2K - 2k + |S_2| \in [n/3 + (2j-1)L, n/3 + (2j+1)L-1].	  
\end{split}
\end{align*}
Using that $k\in [0,K]$ we therefore find that $C_k\in [n]_{i,j}$ if 
\begin{equation}
 \label{conditionforS_1}
 |S_1|\in [n/6 +(2i-1)L ,  n/6 + (2i + 1)L - K - 1]
\end{equation}
and
\begin{equation}
 \label{conditionforS_2}
|S_2| \in [n/3 + (2j-1)L, n/3 + (2j + 1)L-2K - 1].	
\end{equation}
But $|S_1|$ and $|S_2|$ are binomially distributed, with $|S_1| \sim \operatorname{B}\left({n/3-K,1/2 + 6i/n^{1/2}}\right )$ and 
$|S_2| \sim \operatorname{B} \left({2n/3-2K, 1/2 + 3j/n^{1/2}} \right)$ respectively. Therefore, using that $|S_1|$ and $|S_2|$ are independent random variables and applying Chernoff's inequality we have
\begin{align*}
{\mathbb P}(C_k \in [n]_{i,j}) & \geq 
{\mathbb P}(|S_1| \mbox { satisfies } \eqref{conditionforS_1} \mbox{ and } |S_2| \mbox { satisfies } \eqref{conditionforS_2})\\
& = 
{\mathbb P}(|S_1| \mbox { satisfies } \eqref{conditionforS_1}){\mathbb P}(|S_2| \mbox { satisfies } \eqref{conditionforS_2})\\
& \geq 
(1 - 2 e^{-\frac {2 (L/2)^2}{n/3-K}})(1 - 2 e^{-\frac {2 (L/2)^2}{2n/3-2K}})\\
& \geq
(1 - 2 e^{- \frac {n/2}{n/4}})(1 - 2 e^{- \frac {n/2}{n/2}})\\
& \geq 1/8.
\end{align*}
The third inequality above follows since $K \leq n/12$. This gives the first bound.

We now prove \eqref{comparingelementsinsamezone}. Suppose first that $A,B \in [n]_{i,j}$ with $r_B = r_A +1$ and $s_A = s_B$. Now 
\begin{align*}
 {\mathbb P}(C_k = A) 
 & = {\mathbb P}\Big (A\cap U = U_k \mbox{ and } S_1 = A \cap ([n/3]\setminus U)\Big )\\ 
 & \phantom{{}=1} \times {\mathbb P}\Big ( A \cap V = V_k \mbox { and }S_2 = A \cap ([n/3 + 1,n]\setminus V)\Big )\\ 
 & = \frac {\binom {n/6 + r_A}{k} \binom {{n/6} - r_A}{K-k}} {\binom {n/3}{K}} (p_{1,i})^{n/6 +r_A - k} (1-p_{1,i})^{n/6 - r_A - K + k}\\
& \phantom{{}=1} \times \frac {\binom {n/3 + s_A}{2K - 2k} \binom {n/3 - s_A}{2k}}{\binom {2n/3}{2K}}  (p_{2,j})^{n/3 + s_A - 2K + 2k} (1-p_{2,j})^{n/3 - s_A - 2k}.
\end{align*}
This gives
\begin{align*}
 \frac {{\mathbb P}(C_k = A)}{{\mathbb P}(C_k = B)} 
%
%
%
	  & =  \frac { ({\frac {n}{6}} + r_A - k + 1) (\frac {n}{6} - r_A) (1-p_{1,i}) }
	  { ({\frac {n}{6}} + r_A + 1    ) (\frac {n}{6} - r_A - K + k) p_{1,i}}\\
	  & = \Big (1 - \frac {k}{n/6 + r_A + 1} \Big) \Big( 1 + \frac {K - k}{n/6 - r_A - K + k} \Big ) \frac { (1 - 12i/n^{1/2}) } { (1 + 12i/n^{1/2}) }
\end{align*}
Applying the estimates (i) $1 + x \leq e^{2x}$ valid for all $x\in [0,1]$ and (ii) $e^{2x} \leq 1 + x$ for $x\in [-1/2 ,0]$ together with the bounds $12|i|/n^{1/2}\leq 1/2$, $|r_A| \leq (n\log n)^{1/2}$  and $k\leq K = n^{1/2}/12$, 
this gives
\begin{align}
\notag
 \frac { {\mathbb P} (C_k = A) } { {\mathbb P} (C_k = B) } 
						     & \leq  e^ { 2k/(n/6 + r_A + 1)} e^ { 2(K - k)/(n/6 - r_A - K + k) } e^ { 24|i|/ n^{1/2} } e^ { 24|i|/n^{1/2} }\\
\notag						     & \leq  e^ { (n^ {1/2}/6)/(n/6 - (n\log n)^ {1/2}) } . e ^{ (n^{1/2}/6)/( n/6 - (n\log n)^{1/2} - n^{1/2}/12) } \\
\notag						           & \phantom{{}=1}\times e^ { 12 (\log n/n)^{1/2} } e^ { 12 (\log n/n)^{1/2} }\\
\label{localchange1}				     & \leq e^ {30 (\log n/n)^{1/2}}.	
\end{align}	
A similar calculation shows that
\begin{equation}
\label{localchange2}
 \frac {{\mathbb P}(C_k = A)}{{\mathbb P}(C_k = B)} \geq e ^{-30(\log n/n)^{1/2}}.
\end{equation}
Furthermore, an identical argument gives that \eqref{localchange1} and \eqref{localchange2} hold if $A,B \in [n]_{i,j}$ with $r_A = r_B$ and $s_A = s_B +1$. Therefore given 
any two sets $A$ and $B$ in $[n]_{i,j}$, by repeatedly using \eqref{localchange1} and \eqref{localchange2} to change $r_A$ to $r_B$ and $s_A$ to $s_B$, we find 
that 
\begin{align*}
 \frac { {\mathbb P} (C_k = A) } { {\mathbb P} (C_k = B) } & \leq 
e^{ 30(\log n/n) ^{1/2} .( |r_A - r_B| + |s_A - s_B| ) }\\
& \leq 
e^ { 30(\log n/n)^{1/2}.(4L)} = e^ { 120 (\log n) ^{1/2} } = \alpha (n).
\end{align*}
Here we have used that $|r_A - r_B|\leq 2L$ and $|s_A - s_B| \leq 2L$ for all $A,B \in [n]_{i,j}$. This gives \eqref{comparingelementsinsamezone} and therefore concludes the lemma. \end{proof}

%
%
%
%
%
%
%

\section{Concluding remarks}

Let $G$ and ${\mathbf{x}}$ be as in the statement of Theorem \ref{genstickout}. Our original aim in this paper was to show that there exists a function $f: (0,1] \rightarrow (0,1]$ with $f(\alpha ) \rightarrow 0$ as $\alpha \rightarrow 0$ such that the following holds: for $n>n_0$ if $w(G) \leq \alpha 2^n$ then $|{\mathcal A}| \leq f(\alpha )2^n$ for all $(G,{\mathbf{x}})$-Sperner families ${\mathcal A}$. Thus the layered families control the size of all allowable families. Theorem \ref{genstickout} shows that this is true in a stronger form: we can actually take $f(\alpha ) = (1+o(1))\alpha$.
 
A natural question is the following: what happens if we replace the graph $G$ in Theorem \ref{genstickout} by a $3$-uniform hypergraph $H$ on vertex set $\{0,\ldots ,n\}$? Here for each edge $e=ijk$ of $H$ we would forbid a fixed `intersection pattern' $P_{ijk}$ between sets in $A\in [n]^{(i)}, B\in [n]^{(j)}$ and $C\in [n]^{(k)}$. This pattern would be described by the sizes of the intersections $A\cap B\cap C$, $A\cap B\cap C^c$,\ldots , $A^c\cap B^c\cap C^c$. Is it true that, as in Theorem \ref{genstickout}, the maximum size of a $(H, \mathbf{P})$-Sperner family (those families which do not contain one of these patterns) can again be controlled by $w(H)$ (where $w(H)$ is defined as before)? That is, does a function $f$ as above still exist for $3$-uniform hypergraphs? If this is true then it is easily seen that some restrictions on values of the $P_{ijk}$ are needed, like those on $x_{ij}$ in Theorem \ref{genstickout} -- for example, such patterns must satisfy $|A\cap B|,|A\cap C|,|B\cap C| \gg_{\alpha } n^{1/2}$, $|A\cap B \cap C| \gg_{\alpha } 1$ and $|A^c\cup B^c\cup C^c| \gg_{\alpha } 1$. 


Lastly, it would be interesting to know whether the upper bound in Theorem \ref{ordertilted} can be 
replaced by $|{\mathcal A}| \leq C\binom {n}{n/2}$, for some fixed constant $C>0$.

\section*{Acknowledgements}

I would like to thank Imre Leader for many helpful conversations and suggestions. I would also like to thank the anonymous referees for their careful reading of the paper and for several suggestions which have improved its structure.

\end{document}